\theoremstyle{theorem}
\newtheorem{theorem}{Theorem}[section]
\newtheorem{proposition}{Proposition}[section]
\newtheorem{lemma}{Lemma}[section]
\newtheorem{assumption}{Assumption}[section]
\newcommand{\R}{\ensuremath{\mathbf{R}}}
\newcommand{\Nn}{\ensuremath{\mathbf{N}}}
\newcommand{\Gbb}{\ensuremath{\mathbb{G}}}
\newcommand{\E}{\ensuremath{\mathds{E}}}
\newcommand{\one}{\ensuremath{\mathds{1}}}
\newcommand{\dx}{\ensuremath{\mathrm{d}}}
\newcommand{\Var}{\ensuremath{\mathrm{Var}}}
\newcommand{\si}{\perp \! \! \! \perp}
\DeclareMathOperator*{\argmin}{arg\,min}
\newcommand{\Dc}{\ensuremath{\mathcal{D}}}
\newcommand{\Fc}{\ensuremath{\mathcal{F}}}
\newcommand{\Gc}{\ensuremath{\mathcal{G}}}
\newcommand{\Hc}{\ensuremath{\mathcal{H}}}
\newcommand{\Lc}{\ensuremath{\mathcal{L}}}
\newcommand{\Pc}{\ensuremath{\mathcal{P}}}
\newcommand{\Rc}{\ensuremath{\mathcal{R}}}
\newcommand{\Sc}{\ensuremath{\mathcal{S}}}
\begin{document}

\title{\bf Honest Confidence Sets in Nonparametric IV Regression and Other Ill-Posed Models}
\author{Andrii Babii\footnote{University of North Carolina at Chapel Hill - Gardner Hall, CB 3305
		Chapel Hill, NC 27599-3305. Email: \href{mailto:babii.andrii@gmail.com}{babii.andrii@gmail.com}. First draft: March 2016. This paper is a revised first chapter of my Ph.D. thesis. I'm grateful to the associate editor and valuable referees for constructive criticism and suggestions of how to improve the paper.  I'm deeply indebted to my advisor Jean-Pierre Florens and other members of my Ph.D. committee: Eric Gautier, Ingrid Van Keilegom, and Timothy Christensen for helpful suggestions and insightful conversations. This paper also benefited from discussions with Christian Bontemps, Samuele Centorrino, Jasmin Fliegner, Emanuele Guerre, Vitalijs Jascisens, Jihyun Kim, Rohit Kumar, Elia Lapenta, Pascal Lavergne, Thierry Magnac, Andr\'{e} Mas, Nour Meddahi, Markus Reiss, and Shruti Sinha.}
	\\ UNC Chapel Hill
}

\date{October 24, 2019}

\maketitle

\begin{abstract}
	This paper develops inferential methods for a very general class of ill-posed models in econometrics encompassing the nonparametric instrumental variable regression, various functional regressions, and the density deconvolution. We focus on uniform confidence sets for the parameter of interest estimated with Tikhonov regularization, as in \cite{darolles2011nonparametric}. Since it is impossible to have inferential methods based on the central limit theorem, we develop two alternative approaches relying on the concentration inequality and bootstrap approximations. We show that expected diameters and coverage properties of resulting sets have uniform validity over a large class of models, i.e., constructed confidence sets are honest. Monte Carlo experiments illustrate that introduced confidence sets have reasonable width and coverage properties. Using U.S. data, we provide uniform confidence sets for Engel curves for various commodities.
\end{abstract}

\begin{keywords} nonparametric IV regression, functional linear regression, honest uniform confidence sets, non-asymptotic inference, Tikhonov regularization.
\end{keywords}

\begin{jel}\; C14, C36
\end{jel}

\section{Introduction}
This paper develops honest and uniform confidence sets for the structural function $\varphi$ in a generic class of ill-posed models treated with the Tikhonov regularization. The leading example is a nonparametric instrumental variable regression (NPIV) studied in \cite{florens2003inverse}, \cite{newey2003instrumental},  \cite{darolles2011nonparametric}, \cite{hall2005nonparametric}, and \cite{blundell2007semi}. The NPIV model is
\begin{equation}\label{eq:iv0}
Y = \varphi(Z) + U,\qquad \E[U|W]=0,
\end{equation}
where $Z\in\R^p$ is a vector of explanatory variables, $W\in\R^q$ is a vector of instruments, and $\varphi$ is the function of interest. The NPIV model is ill-posed in the sense that the map from the distribution of the data to the function $\varphi$ is not continuous. As a result, we need to introduce some amount of regularization that would smooth out discontinuities and yield a consistent estimator.

In empirical studies, the function $\varphi$ represents a structural economic relation, such as an Engel curve, cost function, or demand curve. It is not enough to estimate this function to infer possible economic effects. We can only infer the range of possible economic effects from confidence sets. This paper is the first to provide inferential methods for Tikhonov-regularized estimators. I focus on \textit{uniform inference}, which amounts to constructing a set containing the \textit{entire} function $\varphi$ with a high probability. Uniform inference allows one to assess global features of the estimated function and to quantify the range of possible economic effects compatible with the data. Global features may include the evidence for non-linearities, the amount of endogeneity bias compared to the local polynomial estimator, monotonicity, concavity/convexity, or other shape properties. In contrast, pointwise confidence intervals only contain the value $\varphi(z_0)$ at some particular point $z_0$ with high probability and do not provide a valid inference for the \textit{entire} function $\varphi$. Another feature of confidence sets constructed in this paper is \textit{honesty} in the sense of \cite{li1989honest}. Honesty means that coverage properties have uniform validity over a large class of specified models.\footnote{See Eq.~\ref{eq:honest} in Section~\ref{sec:honest} for a formal definition.} Honesty is desirable since the underlying model is never known and coverage properties of dishonest sets may vary from one model to another.

Building uniform confidence sets for a function requires approximating the distribution of the supremum of a certain stochastic process. We show that for a broad class of ill-posed models treated with Tikhonov regularization, the stochastic process driving the distribution of the estimator does not converge weakly in the space of functions. As a result, \textit{it is not possible} to build the uniform confidence set by relying on the uniform central limit theorem which calls for alternative approaches to inference.

To construct confidence sets with good coverage properties, we rely on two different approaches. The first inferential method developed in this paper relies on estimates of tail probabilities with a suitable concentration inequality. A simple illustration of this approach is to build a uniform confidence band for the empirical distribution function using the Dvoretzky-Kiefer-Wolfowitz inequality. Given the empirical distribution function $F_n(x)=\frac{1}{n}\sum_{i=1}^n\one_{\{X_i\leq x\}}$ based on the i.i.d. sample $(X_i)_{i=1}^n$, the Dvoretzky-Kiefer-Wolfowitz tells us that for any \textit{finite sample size $n$} the probability of $F_n$ deviating from the true distribution function $F$ in the supremum norm declines at the exponential rate
\begin{equation*}
	\Pr\left(\|F_n - F\|_\infty > x\right)\leq 2e^{-2nx^2},\qquad \forall x>0.
\end{equation*}
Setting $x=\sqrt{\frac{\log (2/\gamma)}{2n}}$ and $\gamma\in(0,1)$, the inequality becomes
\begin{equation*}
	\Pr\left(\|F_n - F\|_\infty\leq \sqrt{\frac{\log (2/\gamma)}{2n}}\right) \geq 1-\gamma.
\end{equation*}
This allows us to build a uniform confidence band for $F(x)$ with a guaranteed coverage probability $1-\gamma$ taking $F_n(x)\pm\sqrt{\frac{\log (2/\gamma)}{2n}}$. In this simple example, there is no coverage error (the coverage is at least $1-\gamma$ for any finite sample size $n$), and the diameter of the set shrinks at the rate $1/\sqrt{n}$. This inferential approach is different from the alternative asymptotic approach based on the Donsker central limit theorem, which also leads to confidence sets with diameters shrinking at the rate $1/\sqrt{n}$, but having the coverage level $1-\gamma-o(1)$. The coverage error disappears only as sample size goes to infinity. The empirical distribution function is an unbiased estimator, while the majority of estimators of functions are biased. As a result, confidence sets for such estimators, including the ill-posed models considered in this paper, usually have coverage errors.

In this paper, we rely on a concentration inequality valid for more complex statistics than $\|F_n - F\|_\infty$, see \cite{boucheron2013concentration}. In particular, we exploit a data-driven concentration inequality for the supremum of the variance of the estimator to approximate quantiles of the unknown distribution. This approach does not rely on the existence of the supremum of the Gaussian process approximating the supremum of the variance of the estimator. As a result, it is valid for a broad class of data-generating processes and is especially useful in settings where all other approaches to inference fail.

For confidence sets based on the concentration inequality, we characterize non-asymptotic rates of coverage errors and expected diameters explicitly. It is especially important to know both rates, since consistency alone may not be very informative for inference. For instance, the coverage error may decrease at a rate slower than the shrinking of the confidence set, requiring larger sample sizes to achieve good coverage. We show that the bias of the estimator, not the noise coming from the estimation of the operator, drives the coverage errors of our confidence sets. To the best of our knowledge, convergence rates for coverage errors have not been derived for the NPIV model or other ill-posed models considered in this paper.

As an alternative, we also study a more traditional bootstrap inference that relies on a non-trivial application of Gaussian and bootstrap coupling inequalities developed in a seminal series of papers, \cite{chernozhukov2014gaussian}, \cite{chernozhukov2016empirical}. To the best of our knowledge, our application of these coupling results to the nonparametric IV model and Tikhonov estimators is new.

Although the Tikhonov-regularized NPIV is the leading example, the inferential methods developed in this paper are valid for other ill-posed models, including functional regression models and the density deconvolution model. To the best of our knowledge, despite the extensive existing literature on $L_2$ results for the functional regression models, no uniform convergence rates or uniform inferential methods are currently available. Moreover, Tikhonov regularization plays a prominent role in the statistical learning theory. The inferential results obtained in this paper can also be applied in that setting.

\paragraph*{Contribution and related literature.} This paper is the first to develop honest uniform inferential methods in a general and unifying framework, encompassing different ill-posed models treated with Tikhonov regularization. This paper is also the first to provide uniform inference for several ill-posed inverse problems based on the concentration inequality and to provide the non-asymptotic analysis, including convergence rates of coverage errors. Uniform confidence sets are available only for sieve-type estimators of the NPIV model and the density deconvolution model. The literature is also silent about coverage errors of confidence sets. Moreover, it is not known what sorts of restrictions are needed to construct honest confidence sets for ill-posed inverse problems. In this paper, we show that the class of models should involve both the class of functions and the class of operators, which is different from classical direct nonparametric problems where we need only to restrict the function. \cite{horowitz2012uniform} develop uniform confidence bands for sieve NPIV estimator by first constructing pointwise confidence intervals at a finite grid of points and then letting the number of grid points grow at a certain speed to achieve uniform coverage. \cite{chen2018optimal} develop inferential methods for the sieve NPIV estimator without relying on discrete approximations. They focus on uniform inference for a collection of linear and nonlinear functionals using Yurinskii's coupling and obtain uniform confidence bands in the particular case of point evaluation functionals, see also \cite{belloni2015some} for the conditional mean function and \cite{tao2014inference} for general conditional moment restriction models. \cite{lounici2011global} obtain uniform confidence sets for the wavelet deconvolution estimator by relying on Bousquet's version of Talagrand's concentration inequality.

\cite{kato2016uniform} consider a more general case, where the density of the noise is estimated from an auxiliary sample. This paper studies the estimator based on Fourier inversion and builds on the coupling inequalities developed in \cite{chernozhukov2014gaussian} and \cite{chernozhukov2016empirical}.

We show that the confidence sets developed in this paper may enjoy polynomial convergence rates of coverage errors in mildly ill-posed, and some severely ill-posed, cases. An appealing feature of Tikhonov's and more general spectral regularizations is that the estimator is always well-defined in finite samples and converges to the best approximation to the structural parameter under various identification failures, see \cite{florens2011identification} and \cite{babiiflorens2016b}.

The theoretical validity of a uniform confidence set relies on uniform convergence rates of the corresponding estimator. \cite{gagliardini2012tikhonov} obtain uniform convergence rates for the Tikhonov-regularized minimum distance estimator relying on the Sobolev embedding. \cite{chen2018optimal} show that the sieve nonparametric IV estimator can attain optimal asymptotic uniform convergence rates under a somewhat different set of assumptions. \cite{babiiflorens2016b} derive \textit{non-asymptotic} $L_2$ and uniform risk bounds for more general spectrally regularized estimators not tied to a particular estimator of the conditional mean function and show that it is possible to attain polynomial convergence rates in some severely ill-posed cases.\footnote{The minimax-optimal uniform convergence rates in the severely ill-posed case are logarithmically slow, see \cite{chen2018optimal}. However, in the more restricted smoothness class, when the estimated function can be described by the finite number of generalized Fourier coefficients, or more generally if its smoothness matches the ill-posedness of the operator, one can achieve faster polynomial uniform convergence rates.}

Some pointwise inferential results are available for spectral cut-off estimators when the ill-posed operator is known and is not estimated from the data, see \cite{carrasco2011spectral}, \cite{gagliardini2012nonparametric}, \cite{gautier2013nonparametric}, and \cite{florenshorowitzkeielgom2016}. At the same time, \cite{chen2015sieve} provide pointwise inference and bootstrap confidence bands for conditional moment restriction models treated with a sieve approach and nesting the NPIV model as a special case. \cite{cardot2007clt} provide results on the asymptotic normality of linear functionals, which solves the prediction problem for functional regression models. Lastly, \cite{carrasco2013asymptotic} study asymptotic normality of inner products for Tikhonov estimators.

The structure of the paper is as follows. We introduce the notation in the remaining part of this section. Section~\ref{sec:honest} describes the problem of constructing honest uniform confidence sets and introduces two inferential approaches in the special case of the nonparametric IV model. Under a general set of assumptions that are verified later on in each particular application, we establish convergence rates for coverage errors and diameters of constructed sets, uniform over a general set of models. Section~\ref{sec:flir_deconv} extends these results to different functional regression models, and to the density deconvolution. In Section~\ref{sec:mc} we show how to implement confidence sets in practice for the NPIV estimator and explore their finite-sample properties with Monte Carlo experiments. Section~\ref{sec:engel} considers the empirical application to Engel curves, and Section~\ref{sec:conclusion} concludes.

\paragraph{Notation.}
Let $L_2[0,1]^p$ denote the space of functions on some compact set $[0,1]^p\subset \R^p$, square integrable with respect to the Lebesgue measure $\lambda$. For $\varphi\in L_2[0,1]^p$, let $\|.\|$ denote the usual $L_2$ norm derived from the inner product $\langle .,.\rangle$. Let $C[a,b]^p$ denote the space of continuous functions endowed with supremum norm $\|.\|_\infty$. For some positive real number $\beta$, let $C^\beta_M[a,b]^p$ denote the class of $\beta$-H\"{o}lder functions on $(a,b)^p$ with $0<M<\infty$
\begin{equation*}
	C_M^\beta[a,b]^p = \left\{\varphi\in C[a,b]^p:\; \max_{|k|\leq \lfloor\beta\rfloor}\|f^{(k)}\|_\infty \leq M,\; \max_{|k|=\lfloor\beta\rfloor}\sup_{z\ne z'}\frac{\left|\varphi^{(k)}(z) - \varphi^{(k)}(z')\right|}{\|z-z'\|^{\beta-\lfloor\beta\rfloor}}\leq M\right\},
\end{equation*}
where $k=(k_1,\dots,k_p)\in\Nn^p$ is a multi-index, $|k|=\sum_{j=1}^pk_j$, $\varphi^{(k)}(z)=\frac{\partial^{|k|}\varphi(z)}{\partial z_1^{k_1}\dots\partial z_p^{k_p}}$, and $\lfloor \beta\rfloor$ is the largest integer strictly smaller than $\beta$. For $A\subset\R$, let $BV(A)$ be a set of functions of bounded variation on $A$. Let $\Lc_{2}$, $\Lc_{2,\infty}$ and $\Lc_{\infty}$ be spaces of bounded linear operators from $L_2$ to $L_2$, from $L_2$ to $C$, and from $C$ to $C$. Sets on which functions are defined should be clear from the context. Spaces $\Lc_2,\Lc_{2,\infty}$, and $\Lc_{\infty}$ are endowed with standard operator norms, denoted  by $\|K\|=\sup_{\|\varphi\|\leq 1}\|K\varphi\|$, $\|K\|_{2,\infty} = \sup_{\|\varphi\|\leq1}\|K\varphi\|_\infty$, and $\|K\|_\infty=\sup_{\|\varphi\|_\infty\leq1}\|K\varphi\|_\infty$. For $K\in\Lc_{2}$, let $K^*$ denote its Hilbert adjoint operator. Let $\Rc(T)$ and $\Dc(T)$ be the range and the domain of the operator $T$. For two real numbers $a$ and $b$, I denote $a\wedge b = \min\{a,b\}$ and $a\vee b = \max\{a,b\}$.

\section{Honest uniform confidence sets}
We focus on uniform confidence sets, honest to some class of models $\Pc$. The class of $\Pc$ consists of probability distributions of the data satisfying certain restrictions. For a given level $\gamma\in(0,1)$, the \textit{honest} $1-\gamma$ \textit{uniform confidence set}, denoted $C_{n,1-\gamma}=\left\{C_{n,1-\gamma}(z)=\left[C_l(z), C_u(z)\right],\;z\in[0,1]^p\right\}$, satisfies the following coverage
\begin{equation}\label{eq:honest}
\inf_{P\in\Pc}\Pr\left(\varphi(z)\in C_{n,1-\gamma}(z),\;\forall z\in[0,1]^p\right)\geq 1-\gamma - O(\delta_n),
\end{equation}
for some sequence $\delta_n\to 0$. Honesty is desirable for theoretical and practical reasons. On the theoretical side, it is well-known that focusing on a fixed model leads to inconsistent concepts of optimality for non-parametric models; see, e.g., \cite{tsybakov2009introduction}, p.16-19. Honesty also ensures that for a sufficiently large sample size $n$, independent from the distribution of the data, the coverage level of the set will be close to $1-\gamma$, no matter what model in $\Pc$ nature gives us. Honesty is desirable in practice since the underlying probability distribution is unknown. In contrast, a \textit{dishonest} set requires a weaker condition
\begin{equation*}
	\inf_{P\in\Pc}\liminf_{n\to\infty}\Pr\left(\varphi(z)\in C_{n,1-\gamma}(z),\;\forall z\in[0,1]^p\right)\geq 1-\gamma
\end{equation*}
and the sample size $n$ needed to achieve the coverage close to $1-\gamma$ will depend on the distribution of the data.

The second requirement for the confidence set is that its expected diameter under the supremum norm, denoted $|C_{n,1-\gamma}|_\infty=\|C_u-C_l\|_\infty$, shrinks at some rate $\rho_n\to0$
\begin{equation*}
	\sup_{P\in\Pc}\E\left|C_{n,1-\gamma}\right|_\infty = O(\rho_n).
\end{equation*}
We will also consider a variation of this requirement when the diameter is bounded in probability only.

Nonparametric estimation usually involves a bias-variance trade-off for the risk of the estimator. This trade-off translates into a trade-off between the rate $\delta_n$ at which coverage errors tend to zero and the rate $\rho_n$ at which the expected diameter tends to zero. Roughly speaking, we show that the coverage error is driven by the bias of the estimator, while the variance determines its diameter. It is vital to know both rates and requiring only the limiting coverage $1-\gamma$ in the Eq.~\ref{eq:honest} can be misleading, as the coverage error of the set may tend to zero arbitrarily slow. In the following sections, we show that the confidence sets based on the concentration inequality allow us to characterize coverage errors explicitly.

It is also worth mentioning that among confidence sets based on different $L_p,p\in[1,\infty]$ norms, only uniform confidence sets have appealing visualization and are easy to implement numerically. For example, if $\varphi$ is the function on the real line, the confidence set becomes a band on the plane, which contains the whole graph of $\varphi$ with high probability.

\section{Nonparametric IV}\label{sec:honest}
\subsection{The model}
The nonparametric IV model
\begin{equation*}
	Y = \varphi(Z) + U,\qquad \E[U|W]=0
\end{equation*}
leads to the functional equation
\begin{equation}\label{eq:iv}
\E[Y|W=w] = \E[\varphi(Z)|W=w],
\end{equation}
which is an example of an ill-posed inverse problem. Even if we knew the distribution of the data $(Y,Z,W)$, obtaining $\varphi$ requires inverting the conditional expectation operator, which is typically not continuous. \cite{florens2003inverse} and \cite{darolles2011nonparametric} introduce Tikhonov regularization, see \cite{tikhonov1963solution}, to smooth out discontinuities of inversion.

Multiplying Eq.~\ref{eq:iv} by the marginal density function $f_W$
\begin{equation*}
	r(w) \triangleq \E[Y|W=w]f_W(w) = \int_{[0,1]^p}\varphi(z)f_{ZW}(z,w)\dx z \triangleq (T\varphi)(w).
\end{equation*}
Following \cite{darolles2011nonparametric} we focus on the kernel estimator, but other choices are possible. For simplicity, we use the product kernel and equal bandwidth parameters $h_n\to 0$ as $n\to\infty$ for all coordinates
\begin{equation*}
	\begin{aligned}
		\hat r(w) & = \frac{1}{nh_n^q}\sum_{i=1}^nY_iK_w\left(h_n^{-1}(W_i-w)\right) \\
		\hat f_{ZW}(z,w) & = \frac{1}{nh_n^{p+q}}\sum_{i=1}^nK_z\left(h_n^{-1}(Z_i-z)\right)K_w\left(h_n^{-1}(W_i-w)\right) \\
		(\hat T\varphi)(w) & = \int_{[0,1]^p}\varphi(z)\hat f_{ZW}(z,w)\dx z.
	\end{aligned}
\end{equation*}
The Tikhonov-regularized estimator solves the penalized least-squares problem
\begin{equation*}
	\hat\varphi = \argmin_{\phi\in L_2}\left\|\hat T\phi - \hat r\right\|^2 + \alpha_{n}\|\phi\|^2,
\end{equation*}
which admits a closed-form solution
\begin{equation*}
	\hat\varphi = (\alpha_n I + \hat T^*\hat T)^{-1}\hat T^*\hat r,
\end{equation*}
where $\hat T^*$ is the adjoint operator of $\hat T$. The adjoint operator is a solution to $\langle \hat T\varphi,\psi \rangle = \langle \varphi,\hat T^*\psi\rangle$ and is computed by Fubini's theorem
\begin{equation*}
	(\hat T^*\psi)(z) =  \int_{[0,1]^q}\psi(w)\hat f_{ZW}(z,w)\dx w.
\end{equation*}

\subsection{Ill-posedness and regularization bias}\label{sec:reg_bias}
We first introduce the class of functions and operators for which we wish to obtain the honest coverage.
\begin{assumption}\label{as:source_condition1}
	The structural function $\varphi$ and the 1-1 operator $T:C[0,1]^p\to C[0,1]^q$ belong to the following class
	\begin{equation*}
		\Fc = \Fc_{\beta,t,M,C} = \left\{(\varphi,T)\in C^t_M[0,1]^p\times \Lc_{2,\infty}:\; \varphi = (T^*T)^\beta T^*\psi,\;\kappa(\varphi,\psi,T,T^*)\leq C\right\},
	\end{equation*}
	where $\kappa(\varphi,\psi,T,T^*) = \|\varphi\|_\infty\vee \|T^*\|_{2,\infty}\vee\|\psi\|\vee \|T\|^{-1}$ and $0<\beta,t,M,C<\infty$.
\end{assumption}
Source conditions are at the heart of spectral regularization theory, and describe the regularity of the problem by restricting how ill-posed the operator $T$ is, compared to the smoothness of the parameter of interest $\varphi$ (see also \cite{chen2011rate} for its comparability with other assumptions used in the literature). The present source condition is different from the one used to characterize $L_2$ rates, where we would require $\varphi = (T^*T)^\beta\psi$ only, see \cite{carrasco2007linear}. 

We show in the next section that under this source condition, our confidence sets enjoy polynomial convergence rates of coverage errors. In the severely ill-posed case, when singular values of the operator $T$ decay to zero exponentially fast, the source condition in Assumption~\ref{as:source_condition1} requires that the function $\varphi$ is well-approximated by a small number of generalized Fourier coefficients with respect to the SVD basis of $T$. It is a remarkable fact that we can still achieve polynomial convergence rates in this case, since many functions encountered in practice have rapidly declining generalized Fourier coefficients.

Assumption~\ref{as:source_condition1} allows us to control the regularization bias. Note that in the Proposition that follows, the bound is uniform over the source set, and this will be needed to establish honesty of our confidence sets.
\begin{proposition}\label{prop:regularization_bias}
	Suppose that Assumption~\ref{as:source_condition1} is satisfied, then
	\begin{equation*}
		\sup_{(\varphi,T)\in\Fc}\left\|(\alpha_n I + T^*T)^{-1}T^*r - \varphi\right\|_\infty \leq R\alpha_n^{\beta\wedge 1},
	\end{equation*}
	where $R = C^2\left[\beta^\beta(1-\beta)^{1-\beta}\one_{0<\beta<1} + C^{2(\beta-1)}\one_{\beta\geq1}\right]$.
\end{proposition}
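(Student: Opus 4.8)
The plan is to turn the regularization bias into a single explicit operator and then control it by spectral (functional) calculus. First I would use the model equation $r = T\varphi$ to write $(\alpha_n I + T^*T)^{-1}T^*r = (\alpha_n I + T^*T)^{-1}T^*T\varphi$, and then the algebraic identity $(\alpha_n I + T^*T)^{-1}T^*T = I - \alpha_n(\alpha_n I + T^*T)^{-1}$ to obtain $(\alpha_n I + T^*T)^{-1}T^*r - \varphi = -\alpha_n(\alpha_n I + T^*T)^{-1}\varphi$. Thus it suffices to bound $\alpha_n\|(\alpha_n I + T^*T)^{-1}\varphi\|_\infty$ uniformly over $(\varphi,T)\in\Fc$.

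Next I would substitute the source representation $\varphi = (T^*T)^\beta T^*\psi$ and move everything to the ``$TT^*$ side'': applying the commutation rule $f(T^*T)T^* = T^*f(TT^*)$ (recalled in the text from \cite{engl1996regularization}) first with $f(x)=x^\beta$ and then with $f(x)=(\alpha_n+x)^{-1}$ gives $(\alpha_n I + T^*T)^{-1}\varphi = T^*(\alpha_n I + TT^*)^{-1}(TT^*)^\beta\psi$. Factoring the outer operator $T^*\in\Lc_{2,\infty}$ out of the supremum norm and using the constraints $\|T^*\|_{2,\infty}\le C$ and $\|\psi\|\le C$ built into the definition of $\Fc$, the problem reduces to showing $\alpha_n\bigl\|(\alpha_n I + TT^*)^{-1}(TT^*)^\beta\bigr\|_{\Lc_2}\le C^{-2}R\,\alpha_n^{\beta\wedge 1}$.

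Here $TT^*$ is a bounded, self-adjoint, nonnegative operator on $L_2$ with spectrum contained in $[0,\|T\|^2]$, so by functional calculus the left-hand side equals $\sup_{\lambda\in\sigma(TT^*)}\alpha_n\lambda^\beta/(\alpha_n+\lambda)$, which is at most $\sup_{0\le\lambda\le\|T\|^2} g_n(\lambda)$ with $g_n(\lambda) := \alpha_n\lambda^\beta/(\alpha_n+\lambda)$. The rest is one-variable calculus. For $0<\beta<1$, $g_n$ has an interior maximizer $\lambda^\star = \tfrac{\beta}{1-\beta}\alpha_n$, and plugging in gives $\sup_{\lambda\ge 0} g_n(\lambda) = \beta^\beta(1-\beta)^{1-\beta}\alpha_n^\beta$. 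For $\beta\ge 1$, $g_n$ is increasing on $[0,\infty)$, so the supremum on $[0,\|T\|^2]$ is $g_n(\|T\|^2)\le \alpha_n\|T\|^{2(\beta-1)}\le C^{2(\beta-1)}\alpha_n$, where in the last step I use that $\|T\|_{\Lc_2}\le\|T^*\|_{2,\infty}\le C$ (with the domain normalized to unit Lebesgue measure, or the volume factor absorbed into $C$). Multiplying through by $C^2$ and noting that $\alpha_n^\beta=\alpha_n^{\beta\wedge 1}$ when $\beta<1$ while $\alpha_n=\alpha_n^{\beta\wedge 1}$ when $\beta\ge 1$ yields precisely $R\,\alpha_n^{\beta\wedge 1}$ with the stated $R$.

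I expect the only genuinely load-bearing step to be the double use of the commutation identity $f(T^*T)T^* = T^*f(TT^*)$: without it, $(\alpha_n I + T^*T)^{-1}\varphi$ is naturally controlled only in the $L_2$ norm, whereas the claim is in the supremum norm; rerouting through $TT^*$ leaves exactly one copy of $T^*$ to be absorbed by $\|T^*\|_{2,\infty}$, which is the whole point. Uniformity over $\Fc$ is then automatic, since every quantity entering the final bound ($\|T^*\|_{2,\infty}$, $\|\psi\|$, and the spectral supremum $\beta^\beta(1-\beta)^{1-\beta}$ or $\|T\|^{2(\beta-1)}$) is controlled by $C$ and $\beta$ alone. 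A secondary technical point is that the $\beta\ge 1$ case requires an \emph{upper} bound on $\|T\|_{\Lc_2}$ — which is not delivered directly by $\|T\|^{-1}\le C$ — so I would obtain it from $\|T\|_{\Lc_2}\le\|T^*\|_{2,\infty}$ up to the (normalized) volume of the domain.
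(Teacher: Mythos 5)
Your proposal is correct and follows essentially the same route as the paper's own proof: the identity $(\alpha_n I + T^*T)^{-1}T^*T - I = -\alpha_n(\alpha_n I + T^*T)^{-1}$, substitution of the source representation $\varphi=(T^*T)^\beta T^*\psi$, the commutation $f(T^*T)T^*=T^*f(TT^*)$ to factor out $\|T^*\|_{2,\infty}$, and the spectral-calculus evaluation of $\sup_{\lambda\in[0,\|T\|^2]}\alpha_n\lambda^\beta/(\alpha_n+\lambda)$ in the two regimes $\beta<1$ and $\beta\geq 1$. Your side remark that the $\beta\geq 1$ case needs an upper bound on $\|T\|$ (obtained from $\|T\|\leq\lambda([a,b]^p)^{1/2}\|T^*\|_{2,\infty}$, with the volume factor absorbed into $C$) is a fair observation about a point the paper passes over silently, and your resolution is adequate.
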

In terms of the bias, the regularization takes advantage of the smoothness up to $\beta=1$. This effect is somewhat reminiscent of the saturation of convergence of the kernel density estimator and can be avoided using the iterated or the extrapolated Tikhonov regularization similarly to using higher or infinite order kernels for the kernel density estimator, see \cite{darolles2011nonparametric} and \cite{carrasco2007linear} for more discussion.

\subsection{Honest confidence sets}
Put
\begin{equation*}
	\|\hat \nu_n^\eta\|_\infty = \left\|\frac{1}{n\sqrt{h_n^q}}\sum_{i=1}^n\eta_i(\alpha_n I + T^*T)^{-1}T^*\hat X_{ni}\right\|_\infty,
\end{equation*}
where $\hat X_{ni}(w) = \frac{1}{\sqrt{h_n^q}}\hat U_iK_w\left(h_n^{-1}(W_i - w)\right)$, $\hat U_i = Y_i - \hat\varphi(Z_i)$, and $\eta_i$ are i.i.d. Rademacher random variables. Let $c_{1-\gamma}^*$ be $1-\gamma$ quantile of
\begin{equation*}
	\|\hat V_n^\varepsilon\|_\infty = \left\|\frac{1}{\sqrt{n}}\sum_{i=1}^n\varepsilon_i \hat X_{ni}\right\|_\infty
\end{equation*}
conditionally on the data $\Dc = (Y_i,Z_i,W_i)_{i=1}^n$ and $\varepsilon_i$ are i.i.d. N(0,1).

The class of models $\Pc$ is restricted by the following assumptions.
\begin{assumption}\label{as:data_npiv}
	(i) $(Y_i,Z_i,W_i)_{i=1}^n$ is an i.i.d. sample of $(Y,Z,W)$; (ii) $f_{ZW}\in C^s([0,1]^{p+q}),s>0$ and there exists $\underline f,\bar f$ such that $0<\underline f\leq f_{ZW}\leq \bar f<\infty$; (iii) $K_z$ and $K_w$ are products of the kernel function $k:\R\to\R$ with $k\in C(\R)\cap BV(\R)$ having order $\lfloor s\rfloor\vee\lfloor t\rfloor$ and such that $k\in L_r(\R),r=1,2,3,4$ and $\int|u|^{s\vee t}|k(u)|\dx u<\infty$; (iv) $\E|Y|^4<\infty$, $\E[|U|^2|W]\geq \underline \sigma_2>0$ a.s., and $|U|\leq F$; (v) the integral operator $T:C[0,1]^p\to C[0,1]^q$ is $1$-$1$; (vi) $h_n$ and $\alpha_n$ tend to $0$ polynomially fast and (a) $h_n^tn\log^2 n\to 0$,  $\alpha_n^{2(\beta\wedge 1 + 1)}h_n^qn\log^2 n \to 0$, and $\alpha_nh_n^{2s+q}n\log^2 n\to 0$; (b) $\alpha_n\log^3 n/h_n^p\to0$, $h_n^{s-q/2}\log n/\alpha_n \to 0$, and $\alpha_n^{\beta\wedge 1}\log n/h_n^{q/2}\to 0$; (c) $\alpha_n^2nh_n^{2q}/\log n\to\infty$.
\end{assumption}
Most of these assumptions impose mild smoothness, boundedness, or moment conditions which are plausible in empirical applications. Conditions imposed on the kernel functions are extremely mild and are verified by all kernel functions commonly used in practice. Similar assumptions are usually made in other settings for uniform nonparametric estimation and inference. Assumption~\ref{as:data_npiv} imposes mild regularity conditions on the data generating process. (i) is the standard sampling assumption and may be relaxed to time series data under some weak dependence conditions, e.g., see \cite{babii2016b}; (ii) may rule out common elements between $Z$ and $W$, see discussion in Section \ref{sec:mc} how to deal with common elements; (iii) allows for boundary-corrected kernels, see \cite{darolles2011nonparametric}\footnote{Otherwise, to avoid problems at end-points, we will make a restriction to the interior of $[0,1]^{p+q}$, and all results should be read as uniform over the interior of this set.} for further discussion. Assumption (v) is a completeness condition which may be questionable in empirical applications. Note, however, that in many cases the completeness condition can be relaxed, and it is possible to have valid inferences for various deviations from the completeness condition, see \cite{babiiflorens2016b} for more details.

Assumption~\ref{as:data_npiv} (vi) involves several conditions on tuning parameters and deserves special attention. It imposes implicitly that different components of the model have sufficient regularity with respect to the dimensions $p$ and $q$. To illustrate that these conditions are not contradictory, suppose that $h_n\sim n^{-c_1}$ and $\alpha_n\sim n^{-c_2}$ for some $c_1,c_2>0$ and that $\beta\geq 1$. Then Assumption~\ref{as:data_npiv} reduces to the following 6 conditions: (1) $c_2>(p\vee q/2)c_1$, (2) $4c_2+qc_1>1$, (3) $2c_2+2qc_1<1$, (4) $c_1>1/t$, (5) $c_1(s-q/2)>c_2$, and (6) $c_2+c_1(2s+q)>1$. Conditions (1)-(3) define the set of feasible value for $c_1$ and $c_2$ depending on the dimension parameters $p$ and $q$. Then (4) and (5)-(6) simply require that smoothness parameters $t$ and respectively $s$ are sufficiently large. For instance, when $p=q=1$, we can take, $h_n\sim n^{-1/5}$ and $\alpha_n\sim n^{-1/4}$, and need $t>5$ and $s>7/4$.

The bootstrap and the concentration inequality-based confidence sets are described as
\begin{equation}\label{eq:npiv_ci}
\begin{aligned}
C^*_{n,1-\gamma} & = \left\{[\hat\varphi(z) - q_n^*,\hat\varphi(z)+q_n^*]:\; z\in[0,1] \right\}, \\
C_{n,1-\gamma} & = \left\{[\hat\varphi(z) - \hat q_n,\hat\varphi(z) + \hat q_n]:\; z\in[0,1] \right\}
\end{aligned}
\end{equation}
with
\begin{equation*}
	\begin{aligned}
		q_n^* & = \frac{c_{1-\gamma}^*\|\hat T^*\|_{2,\infty} + \log^{-1}n}{\alpha_n\sqrt{nh_n^q}}, \\
		\hat q_n & = 2\left\|\hat \nu_n^\eta\right\|_\infty +  \frac{3\|\hat T^*\|_{2,\infty}F\|k\|^q\sqrt{2\log(1/\gamma)} + \log^{-1}n}{\alpha_n\sqrt{nh_n^q}}, \\
	\end{aligned}
\end{equation*}
where the operator norm can be computed using Lemma~\ref{lemma:operator_to_kernel} as
\begin{equation*}
	\|\hat T^*\|_{2,\infty} = \left(\sup_{z}\int|\hat f_{ZW}(z,w)|^2\dx w\right)^{1/2}.
\end{equation*}

The next result provides theoretical justification for our honest uniform confidence sets.
\begin{theorem}\label{thm:cs_npiv}
	Suppose that $\Pc$ consists of distributions satisfying Assumptions~\ref{as:source_condition1} and \ref{as:data_npiv}. Then
	\begin{equation*}
		\inf_{P\in\Pc}\Pr\left(\varphi\in C_{n,1-\gamma}\right) \geq 1 - \gamma - O(\delta_n)
	\end{equation*}
	with
	\begin{equation*}
		\begin{aligned}
			\delta_n & = \left(\frac{h_n^\frac{t-q}{2}}{\alpha_n} + \frac{1}{\alpha_n^{1/2}}\left(\sqrt{\frac{\log h_n^{-1}}{nh_n^{p+q}}} + h_n^s\right) + \alpha_n^{\beta\wedge 1}\right)h_n^{q/2}\alpha_nn^{1/2}\log n + \frac{\log n}{\alpha_n}\left(\frac{1}{\sqrt{nh_n^q}} + h_n^s\right). \\
		\end{aligned}
	\end{equation*}
	Moreover, if Assumption~\ref{as:data_npiv} (v)-(vi) is satisfied
	\begin{equation*}
		\liminf_{n\to\infty}\inf_{P\in\Pc}\mathrm{Pr}\left(\varphi\in C_{n,1-\gamma}^*\right) \geq 1 - \gamma.
	\end{equation*}
\end{theorem}

The proof of this result can be found in the Appendix and is a consequence of more general Theorems~\ref{thm:bootstrap} and \ref{thm:main_ci}. In particular, Theorem~\ref{thm:main_ci} derives coverage errors for concentration inequality-based confidence sets, while Theorem~\ref{thm:bootstrap} establishes the asymptotic validity of bootstrap confidence sets for a general class of ill-posed inverse models. Theorem~\ref{thm:bootstrap} is based on the Gaussian and bootstrap coupling inequalities as well as the anti-concentration inequality of \cite{chernozhukov2014gaussian} and \cite{chernozhukov2016empirical}. Note that concentration-based confidence sets allow us to characterize explicitly the coverage error, which has not been known for confidence sets in the nonparametric IV model. Inspection of the proof of Theorem~\ref{thm:bootstrap} reveals that bootstrap confidence sets have additional coverage errors due to the Gaussian and bootstrap approximations.

To illustrate that conditions needed for $\delta_n\to 0$ are not contradictory, suppose that $h_n\sim n^{-c_1}$ and $\alpha_n\sim n^{-c_2}$ for some $c_1,c_2>0$ and that $\beta\geq 1$. Then we need (1) $c_2>pc_1$, (2) $4c_2+qc_1>1$, (3) $2c_2+qc_1<1$, (4) $c_1>1/t$, (5) $c_1s>c_2$, and (6) $c_2+c_1(2s+q)>1$. Note that the set of feasible values of $c_1,c_2$ is larger than that allowed by Assumption~\ref{as:data_npiv} (vi).

The following theorem describes the expected size of our confidence sets.
\begin{theorem}\label{thm:npiv_cs_npiv}
	Suppose that assumptions of Theorem~\ref{thm:cs_npiv} are satisfied. Then
	\begin{equation*}
		\sup_{P\in\Pc}\E_P\left|C_{n,1-\gamma}\right|_\infty = O\left(\frac{1}{\alpha_n\sqrt{nh_n^q}}\right)\quad \text{and}\quad \left|C_{n,1-\gamma}^*\right|_\infty = O_p\left(\frac{1}{\alpha_nh_n^qn^{1/2}}\right),
	\end{equation*}
	uniformly over $P\in\Pc$.
\end{theorem}
Note that we need $\alpha_n^2nh_n^{q}\to \infty$ to ensure that the concentration-based confidence set shrinks and a slightly stronger condition $\alpha_n^2nh_n^{2q}\to \infty$ to ensure that the bootstrap confidence set shrinks, which is also needed to ensure that the coverage error tends to zero.

\section{Functional regressions and density deconvolution}\label{sec:flir_deconv}
\subsection{Functional regressions}
In the functional linear regression, the real dependent variable $Y$ is explained by the continuous-time stochastic process $Z(t),t\in[0,1]^p$. The distinctive feature of this class of models is that it allows handling high-dimensional data without relying on the \textit{sparsity} assumption, which may be restrictive in this setting. There is vast literature on functional regression models in statistics, see \cite{cardot2003testing}, \cite{hall2007methodology}, and references therein. There is also some growing literature in  econometrics, see \cite{florens2015instrumental}, \cite{benatia2017functional}, and \cite{babii2016b}. However, all these papers focus on the estimation in the $L_2$ norm and to the best of my knowledge, there are no currently available uniform inferential methods for functional regression models.

The functional IV regression model is described as
\begin{equation*}
	Y = \int_{[0,1]^p}\varphi(t)Z(t)\dx t + U,\qquad \E[UW(s)] = 0,\qquad \forall s\in[0,1]^q.
\end{equation*}
The slope parameter $\varphi$ measures the strength of the impact of the process $Z$ at different points $t\in[0,1]^p$. If $W=Z$, we obtain the classical functional linear regression model without endogeneity, see, for instance, \cite{hall2007methodology}, while in the IV case of \cite{florens2015instrumental}, $W$ is some functional instrumental variable, uncorrelated with the error term.

The moment restriction leads to the ill-posed equation
\begin{equation*}
	r(s) \triangleq \E[YW(s)] = \int_{[0,1]^p}\varphi(t)\E[Z(t)W(s)]\dx t \triangleq (T\varphi)(s).
\end{equation*}
The slope parameter $\varphi$ is identified when the covariance operator is 1-1, which generalizes the non-singularity condition for the covariance matrix in the finite-dimensional linear regression model.

A variation of this model is studied in \cite{babii2016b}, where the identification is achieved with real-valued instrumental variables through the conditional moment restriction $\E[U|W]=0$. The identifying restriction is the linear completeness condition. Conditional mean-independence leads to the following ill-posed equation
\begin{equation}\label{eq:flir_babii}
\E[Y|W] = \int_{[0,1]^p}\varphi(t)\E[Z(t)|W]\dx t,
\end{equation}
which for appropriate families of functions $\Psi(s,.)$ can equivalently be written as
\begin{equation*}
	r(s) \triangleq \E[Y\Psi(s,W)] = \int_{[0,1]^p}\varphi(t)\E[Z(t)\Psi(s,W)]\dx t\triangleq (T\varphi)(s).
\end{equation*}

In what follows we denote by $W(s)$ either the regressor $Z(s)$, some functional IV $W(s)$, or the function $\Psi(s,W)$ of some real IV $W$. This notation allows encompassing all variations of the functional regression discussed above.

Unlike the NPIV, which requires a non-parametric estimation of the joint density function, all components of functional regression models are estimated at the parametric rate (in the $L_2$ norm) using sample analogs to population moments
\begin{equation*}
	\hat r(s) = \frac{1}{n}\sum_{i=1}^nY_iW_i(s),\qquad \hat k(t,s) = \frac{1}{n}\sum_{i=1}^nZ_i(t)W_i(s).
\end{equation*}
Operator $T$ can be estimated as
\begin{equation*}
	(\hat T\varphi)(s) = \int_{[0,1]^p}\varphi(t)\hat k(t,s)\dx t
\end{equation*}
The adjoint operator $\hat T^*$ can be obtained using Fubini's theorem
\begin{equation*}
	(\hat T^*\varphi)(t) = \int_{[0,1]^q}\psi(s)\hat k(t,s)\dx s.
\end{equation*}

Put
\begin{equation*}
	\|\hat \nu_n^\eta\|_\infty = \left\|\frac{1}{n}\sum_{i=1}^n\eta_i(\alpha_n I + T^*T)^{-1}T^*\hat X_{ni}\right\|_\infty,
\end{equation*}
where $\hat X_{ni}(s) = \hat U_iW_i(s)$, $\hat U_i = Y_i - \langle Z_i,\varphi\rangle$, and $\eta_i$ are i.i.d. Rademacher random variables.

Let $c_{1-\gamma}^*$ be $1-\gamma$ quantile of
\begin{equation*}
	\|\hat V_n^\varepsilon\|_\infty = \left\|\frac{1}{\sqrt{n}}\sum_{i=1}^n\varepsilon_i \hat X_{ni}\right\|_\infty
\end{equation*}
and $\varepsilon_i\sim_{i.i.d.}N(0,1)$.

The following assumption restricts the class of models $\Pc$.
\begin{assumption}\label{as:data_flir}
	(i) $(Y_i,Z_i,W_i)_{i=1}^n$ is an i.i.d. sample of $(Y,Z,W)$; (ii) $Z$ and $W$ have continuous trajectories, $\|Z\|_\infty\leq C$ and $\|UW\|_\infty\leq F$; (iii) $W\in C^s_M[0,1]^q$ and $ZW\in C_M^s[0,1]^{p+q}$, for some $s>(p+q)/2$; (iv) $\E|UW(s)|\geq \underline\sigma>0$ for all $s\in[0,1]^q$; (v) the integral operator $T:C[0,1]^p\to C[0,1]^q$ is 1-1;  (vi) $\alpha_n\to 0$ polynomially fast and $\alpha_nn/\log^2 n\to \infty$ and $\alpha_n^{2(\beta\wedge 1) + 2}n\log^2n\to0$.
\end{assumption}
Note that the functional linear regressions admit a simpler characterization and involve even less stringent assumptions than the nonparametric IV model.

The bootstrap and the concentration inequality-based confidence sets are
\begin{equation*}
	C^*_{n,1-\gamma} = \left\{[\hat\varphi(t) - q_n^*,\hat\varphi(t)+q_n^*]:\; t\in[0,1] \right\}
\end{equation*}
and
\begin{equation*}
	C_{n,1-\gamma} = \left\{[\hat\varphi(t) - \hat q_n,\hat\varphi(t) + \hat q_n]:\; t\in[0,1] \right\}
\end{equation*}
with
\begin{equation*}
	\begin{aligned}
		q_n^* & = \frac{c_{1-\gamma}^*\|\hat T^*\|_{2,\infty} + \log^{-1}n}{\alpha_n\sqrt{n}}, \\
		\hat q_n & = 2\left\|\hat \nu_n^\eta\right\|_\infty +  \frac{3\|\hat T^*\|_{2,\infty}F\sqrt{2\log(1/\gamma)} + \log^{-1}n}{\alpha_n\sqrt{n}}, \\		
	\end{aligned}
\end{equation*}
where the operator norm can be computed using Lemma~\ref{lemma:operator_to_kernel} as
\begin{equation*}
	\|\hat T^*\|_{2,\infty} = \left(\sup_{t}\int|\hat k(t,s)|^2\dx s\right)^{1/2}.
\end{equation*}

The next result provides theoretical justification for confidence sets based on the concentration inequality and bootstrap confidence sets.
\begin{theorem}\label{thm:confidence_set_flir}
	Suppose that Assumptions~\ref{as:source_condition1} and \ref{as:data_flir} are satisfied. Then for any $\gamma\in(0,1)$
	\begin{equation*}
		\inf_{P\in\Pc}\mathrm{Pr}\left(\varphi\in C_{n,1-\gamma}\right) \geq 1 - \gamma - O\left(\left(\alpha_n^{\beta\wedge 1+1}n^{1/2} + \frac{1}{\sqrt{\alpha_nn}}\right)\log n\right).
	\end{equation*}
	Moreover, if Assumption~\ref{as:data_decon} (vi) is satisfied
	\begin{equation*}
		\liminf_{n\to\infty}\inf_{P\in\Pc}\mathrm{Pr}\left(\varphi\in C_{n,1-\gamma}^*\right) \geq 1 - \gamma.
	\end{equation*}
\end{theorem}

Note that for concentration inequality-based confidence sets the coverage error is effectively driven by the bias induced by the regularization and kernel smoothing. For bootstrap confidence sets we have an additional coverage error due to the Gaussian and bootstrap approximations and need additional condition $nh_n^2/\log^7n\to \infty$. The following result describes the expected size of various confidence sets.
\begin{theorem}\label{thm:flir_cs_rate}
	Suppose that assumptions of Theorem~\ref{thm:confidence_set_deconv} are satisfied, then
	\begin{equation*}
		\sup_{P\in\Pc}\E_P\left|C_{n,1-\gamma}\right|_\infty = O\left(\frac{1}{\alpha_nn^{1/2}}\right)\qquad \text{and}\qquad  \left|C_{n,1-\gamma}^*\right|_\infty = O_p\left(\frac{1}{\alpha_nn^{1/2}}\right),
	\end{equation*}
	uniformly over $P\in\Pc$.
\end{theorem}
To ensure that confidence sets shrink we need $\alpha_n^2n \to \infty$. In both cases, sets shrink at the same speed.

\subsection{Density deconvolution}
Often economic data are not measured precisely. Density deconvolution allows estimating the density of unobserved data from the data measured with errors. Density deconvolution is encountered in a variety of econometric applications, e.g. to the earning dynamics in \cite{bonhomme2010generalized}, to panel data in \cite{evdokimov2010identification}, or to the instrumental regression in \cite{adusumilli2018nonparametric}. In the simplest example of this model, see \cite{carrasco2011spectral}, we have some noisy scalar observations $Y$, of the latent variable $Z$, contaminated by measurement errors $U$
\begin{equation*}
	Y = Z + U,\qquad Z\si U.
\end{equation*}
Distributions of both $Z$ and $U$ are assumed to be absolutely continuous with respect to the Lebesgue measure with densities $\varphi$ and $f$. The density of measurement errors $f$ is assumed to be known. The goal is to recover the density of the latent variable $Z$ from observing contaminated i.i.d. sample $(Y_i)_{i=1}^n$.

Independence and additivity of the noise imply that the density function $r$ of $Y$ satisfies the following convolution equation
\begin{equation}\label{eq:convolution}
r(y) = \int \varphi(z) f(y-z) \dx z =: (T\varphi)(y),
\end{equation}
where the operator $T:L_2\to L_2$. For simplicity of presentation and tractability of our results, let us assume that all densities are continuous, bounded and compactly supported, with support contained inside the interval $[0,1] \subset \R$. This assumption is not very restrictive, since most of the economic variables, e.g., reported earning or costs are bounded.

The adjoint operator to $T$ can be computed using Fubini's theorem
\begin{equation*}
	(T^*\psi)(z) = \int \psi(y)f(y-z) \dx y.
\end{equation*}
We estimate the density of $Y$ from the sample $(Y_i)_{i=1}^n$ using the kernel density estimator
\begin{equation*}
	\hat r(y) = \frac{1}{nh_n}\sum_{i=1}^nK\left(\frac{Y_i - y}{h_n}\right).
\end{equation*}

Put
\begin{equation*}
	\|\hat \nu_n^\eta\|_\infty = \left\|\frac{1}{n\sqrt{h_n}}\sum_{i=1}^n\eta_i(\alpha_n I + T^*T)^{-1}T^*\hat X_{ni}\right\|_\infty,
\end{equation*}
where $\hat X_{ni}(y) = \frac{1}{\sqrt{h_n}}K\left(\frac{Y_i - y}{h_n}\right) - \frac{1}{n\sqrt{h_n}}\sum_{i=1}^nK\left(\frac{Y_i - y}{h_n}\right)$ and $\eta_i$ are i.i.d. Rademacher random variables.

The class of models $\Pc$ is restricted by the following assumptions.
\begin{assumption}\label{as:data_decon}
	(i) $(Y_i)_{i=1}^n$ is an i.i.d. sample of $Y$; (ii) $f,\varphi, r$ are continuous, compactly supported on some subsets of $[0,1]$, and the density $r$ is uniformly bounded away from zero and infinity by some finite constant; (iii) $r\in C_M^s[0,1]$, for some $s>0$; (iv) $K$ is a symmetric continuous square-integrable kernel function of bounded variation of order $\lfloor s\rfloor$ satisfying $\int|u|^s|K(u)|\dx u<\infty$; (v) the integral operator $T:C[0,1]^p\to C[0,1]^q$ is 1-1; (vi) $h_n\to 0$ and $\log h_n^{-1}=O(\log n)$, $nh_n^2/\log^7n\to \infty$, $nh_n^{2s+1}\log^2 n\to 0$, and $\alpha_n^{2(\beta\wedge 1) + 2}nh_n\log^2n\to0$.
\end{assumption}

Note that for this model, operators $T$ and $T^*$ are known, which simplifies the analysis. Let $c_{1-\gamma}^*$ be $1-\gamma$ quantile of
\begin{equation*}
	\|\hat V_n^\varepsilon\|_\infty = \left\|\frac{1}{\sqrt{n}}\sum_{i=1}^n\varepsilon_i \hat X_{ni}\right\|_\infty
\end{equation*}
and $\varepsilon_i\sim_{i.i.d.}N(0,1)$.

The bootstrap and the concentration inequality-based confidence sets are described as
\begin{equation*}
	C^*_{n,1-\gamma} = \left\{[\hat\varphi(z) - q_n^*,\hat\varphi(z)+q_n^*]:\; z\in[0,1] \right\}
\end{equation*}
and
\begin{equation*}
	C_{n,1-\gamma} = \left\{[\hat\varphi(z) - \hat q_n,\hat\varphi(z) + \hat q_n]:\; z\in[0,1] \right\}
\end{equation*}
with
\begin{equation*}
	\begin{aligned}
		q_n^* & = \frac{c_{1-\gamma}^*\|T^*\|_{2,\infty} + \log^{-1}n}{\alpha_n\sqrt{nh_n}}, \\
		\hat q_n & = 2\left\|\hat \nu_{n}^\eta\right\|_\infty +  \frac{6\|T^*\|_{2,\infty}\|K\|\sqrt{2\log(1/\gamma)} + \log^{-1}n}{\alpha_n\sqrt{nh_n}}, \\		
	\end{aligned}
\end{equation*}
where the operator norm can be computed using Lemma~\ref{lemma:operator_to_kernel} as
\begin{equation*}
	\|T^*\|_{2,\infty} = \left(\sup_{z}\int|f(y-z)|^2\dx y\right)^{1/2}.
\end{equation*}

The next result provides theoretical justification for confidence sets based on the concentration inequality and bootstrap confidence sets.
\begin{theorem}\label{thm:confidence_set_deconv}
	Suppose that Assumptions~\ref{as:source_condition1} and \ref{as:data_decon} are satisfied. Then for any $\gamma\in(0,1)$
	\begin{equation*}
		\inf_{P\in\Pc}\mathrm{Pr}\left(\varphi\in C_{n,1-\gamma}\right) \geq 1 - \gamma - O\left(\left(h_n^{s} + \alpha_n^{\beta\wedge 1+1}\right)\sqrt{nh_n}\log n + \frac{\log n}{n^{1/2}}\right).
	\end{equation*}
	Moreover, if Assumption~\ref{as:data_decon} (vi)-(vii) is satisfied
	\begin{equation*}
		\liminf_{n\to\infty}\inf_{P\in\Pc}\mathrm{Pr}\left(\varphi\in C_{n,1-\gamma}^*\right) \geq 1 - \gamma.
	\end{equation*}
\end{theorem}

Note that for concentration inequality-based confidence sets the coverage error is effectively driven by the bias induced by the regularization and kernel smoothing. For bootstrap confidence sets we have an additional coverage error due to the Gaussian and bootstrap approximations and need additional condition $nh_n^2/\log^7n\to \infty$. The following result describes the expected size of various confidence sets.
\begin{theorem}\label{thm:deconv_cs_rate}
	Suppose that assumptions of Theorem~\ref{thm:confidence_set_deconv} are satisfied, then
	\begin{equation*}
		\sup_{P\in\Pc}\E_P\left|C_{n,1-\gamma}\right|_\infty = O\left(\frac{1}{\alpha_n\sqrt{nh_n}}\right)\qquad \text{and}\qquad \left|C_{n,1-\gamma}^*\right|_\infty = O_p\left(\frac{1}{\alpha_nh_nn^{1/2}}\right),
	\end{equation*}
	uniformly over $P\in\Pc$.
\end{theorem}
To ensure that a concentration inequality-based confidence set shrinks, we need $\alpha_n^2h_nn \to \infty$ while for the bootstrap confidence set we need the slightly stronger requirement $\alpha_n^2h_n^2n\to \infty$.

\section{Implementation and Monte Carlo experiments}\label{sec:mc}
This section reports results of Monte Carlo experiments for our confidence sets. We focus on the NPIV estimator. Samples of size $n\in\{1000,5000\}$ are generated as follows
\begin{equation*}
	\begin{aligned}
		Y & = \varphi(Z) + U,\qquad \varphi(z) = e^{-z^2/0.8}, \\
		\begin{pmatrix}
			Z \\
			W \\
			U
		\end{pmatrix} & \sim N\left(\begin{pmatrix}
			0 \\
			0 \\
			0
		\end{pmatrix},\begin{pmatrix}
			\sigma^2_z & \rho \sigma_z\sigma_w & \sigma_{zu} \\
			\rho \sigma_z\sigma_w & \sigma^2_w & 0 \\
			\sigma_{zu} & 0 & \sigma^2_u \\
		\end{pmatrix}\right),
	\end{aligned}
\end{equation*}
where $\sigma_z = \sigma_w = 0.3$, $\sigma_u = \sqrt{0.03}, \sigma_{zu} = 0.04$, and $\rho=0.3$. To be consistent with the theory, we keep only observations inside a sufficiently large compact set. The strength of the instrument as measured by the correlation coefficient $\rho$ is calibrated to our empirical application.

Since the joint density of $(Z,W)$ is bivariate normal, this choice of functional forms resembles up to a constant the convolution of Gaussian densities. Therefore, with appropriate modifications, we can easily adapt the simulation design not only to functional regression but also to the deconvolution model. Since the underlying integral equation would be the same in all cases, to economize on space, we do not report MC experiments for other models. We also remark that the normal density leads to rapidly declining singular values. This design corresponds to the most difficult severely ill-posed setting. We consider $5000$ replications of each experiment. 

We consider confidence sets described in Eq.~\ref{eq:npiv_ci}, where the operator norm $\|\hat T^*\|_{2,\infty}$ is computed using the mixed norm of the kernel function of $\hat T^*$ by Lemma~\ref{lemma:operator_to_kernel}. The estimate of the density function $f_{ZW}$ is obtained using kernel smoothing. For simplicity of implementation, we do not optimize the performance with higher-order boundary kernels and take the product of second-order Epanechnikov kernels.

The estimator is discretized using a simple Riemann sum on the grid of $100$ equidistant points. This suffices to ensure that the numerical errors are negligible compared to the statistical noise in our setting. A higher number of grid points or cubature can give better approximation if needed. The discretized estimator has closed-form expression, and its computation boils down to solving a system of linear equations 
\begin{equation*}
	\hat{\boldsymbol{\varphi}} = \left(\alpha_n \mathbf{I} + \mathbf{K}^\top\mathbf{K}\right)^{-1}\mathbf{K}^\top \mathbf{r},
\end{equation*}
where $\mathbf{I}$ is the $T\times T$ identity matrix, $\mathbf{r} = \left(\frac{1}{n}\sum_{i=1}^nY_ih_n^{-1}K\left(h_n^{-1}(W_i - w)\right)\right)_{1\leq j\leq T}$,  $\mathbf{K}=\boldsymbol{f}\Delta$ with $\boldsymbol{f} = (\hat f_{ZW}(z_k,w_j))_{1\leq j,k\leq T}$, and $\Delta$ is the grid step. Therefore, the estimator is straightforward to implement and fast to compute. 

\paragraph{Data-driven choice of tuning parameters.} As discussed in Section~\ref{sec:honest}, for confidence sets, the bias-variance trade-off for the risk of the estimator reduces to the trade-off between coverage errors and the diameter of confidence sets. For larger values of tuning parameters, the band becomes too narrow, and the bias starts to dominate, reducing uniform coverage. On the other side, even though smaller values of tuning parameters reduce bias and the estimator becomes closer to its population value, they also increase the size of the variance and lead to wider bands. The optimal choice of tuning parameters balances the two.

Existing data-driven rules for Tikhonov regularization aim to obtain an accurate estimator, see \cite{feve2010practice} or \cite{centorrino2014data}, and do not take into account the trade-off between the coverage and the diameter. Note that the risk-optimal estimation conflicts with optimal inference. In particular, our theory requires undersmoothing in order to obtain valid inferences. We suggest the following procedure inspired by \cite{bissantz2007non}:
\begin{enumerate}
	\item Use some over-smoothing data-driven rule to select pilot tuning parameters. We consider the cross-validation studied in \cite{centorrino2014data} which consists of solving the leave-one-out empirical counterpart to $\|\hat r - \hat T\hat\varphi_{\alpha,h}\|$
	\begin{equation*}
		(\hat\alpha_J,\hat h_J) = \argmin_{\alpha,h>0}\frac{1}{n}\sum_{i=1}^n\left\{\frac{1}{n-1}\sum_{j\ne i}(Y_j - [K_h\ast\hat\varphi_{\alpha,h}](Z_j))\frac{1}{h}K\left(h^{-1}(W_j - W_i)\right) \right\}^2,
	\end{equation*}
	where $[K_h\ast g](x) = \frac{1}{h}\int g(z)K(h^{-1}(x-z))\dx x$ denotes the convolution operator. To ensure that tuning parameters are over-smoothing, we scale pilot tuning parameters $(\kappa_a\hat\alpha_J,\kappa_h\hat h_J)$ with some $\kappa_a,\kappa_h\geq 1$. 
	\item Compute $\hat\varphi_{h_j,\alpha_j}$ on a grid of points $(j\kappa_a\alpha_J/J,j\kappa_hh_J/J)_{j=1}^J$. Choose the largest $(\alpha_j,h_j)$ such that
	\begin{equation*}
		\|\hat\varphi_{\alpha_j,h_j} - \hat\varphi_{\alpha_{j-1},h_{j-1}}\|_\infty > \tau\|\hat\varphi_{\alpha_J,h_J} - \hat\varphi_{\alpha_{J-1},h_{J-1}}\|_\infty 
	\end{equation*}
\end{enumerate}
Like \cite{bissantz2007non} we find that $J=20$ and $\tau = 2$ work well in practice. We also find that $(\kappa_a,\kappa_h) = (3.5,1.5)$ is suitable for concentration-based confidence sets and $(\kappa_a,\kappa_h) = (1,1.5)$ works well for bootstrap confidence sets. Note that \cite{bissantz2007non} have a single tuning parameter, and they find that the factor $1.5$ is suitable, which is what we find for the bandwidth parameter.

\paragraph{Simulation results.} In Figure~\ref{fig:1_1} we plot estimates with a $95\%$ confidence band based on bootstrap approximation, averaged over $5000$ Monte Carlo experiments. Figure \ref{fig:1_2} represents the same plot for confidence bands based on concentration inequality. We also report MC coverage probabilities $\hat\gamma$. In our Monte Carlo experiments, bootstrap approximation leads to slightly better performance. Resulting confidence bands are narrower, and the estimator is centered more closely to the population value of the parameter of interest. Nonetheless, confidence sets based on the concentration inequality are also informative about global shape properties of the estimated function. For example, Figure~\ref{fig:1_1} (b) gives an excellent confidence band with a reasonably small amount of bias.

\begin{figure}
	\begin{subfigure}[b]{0.5\textwidth}
		\includegraphics[width=\textwidth]{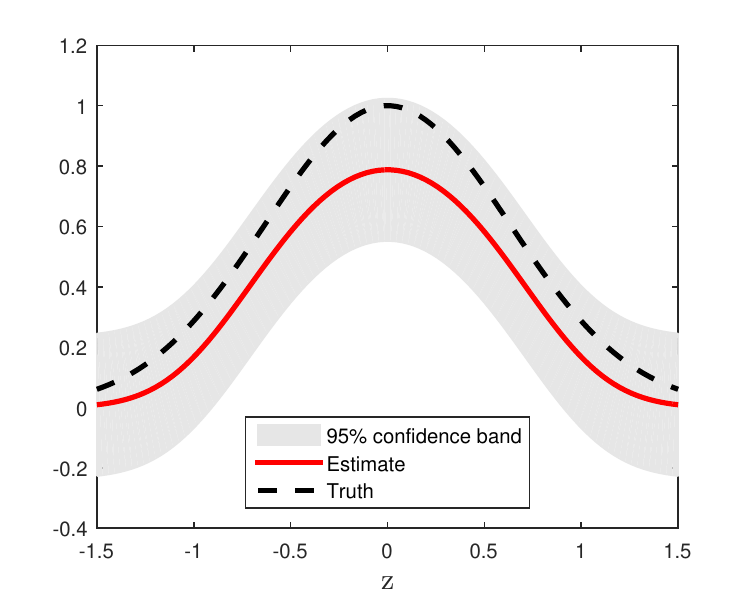}
		\caption{$n = 1000,\hat\gamma = 0.87$}
	\end{subfigure}
	\begin{subfigure}[b]{0.5\textwidth}
		\includegraphics[width=\textwidth]{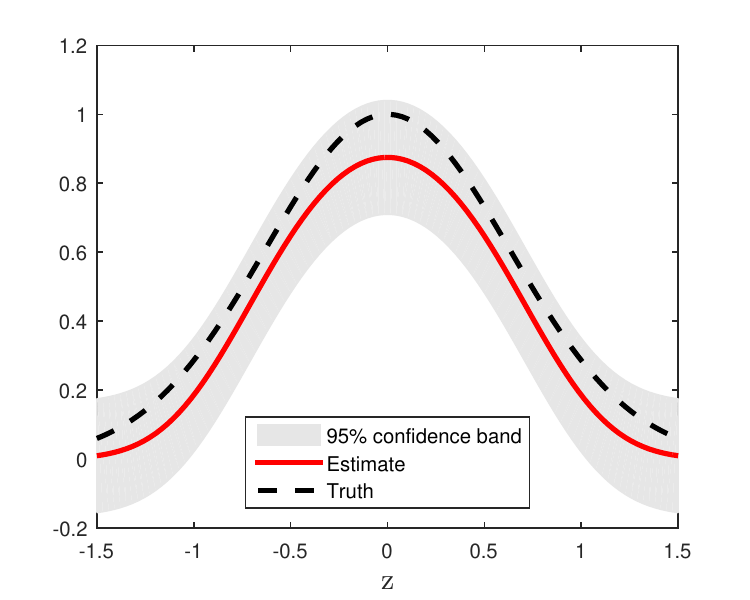}
		\caption{$n = 5000,\hat\gamma = 0.97$}
	\end{subfigure}
	\caption{Bootstrap uniform confidence bands, averaged over $5000$ experiments. $\hat\gamma$ is the MC coverage probability.}
	\label{fig:1_1}
\end{figure}

\begin{figure}
	\begin{subfigure}[b]{0.5\textwidth}
		\includegraphics[width=\textwidth]{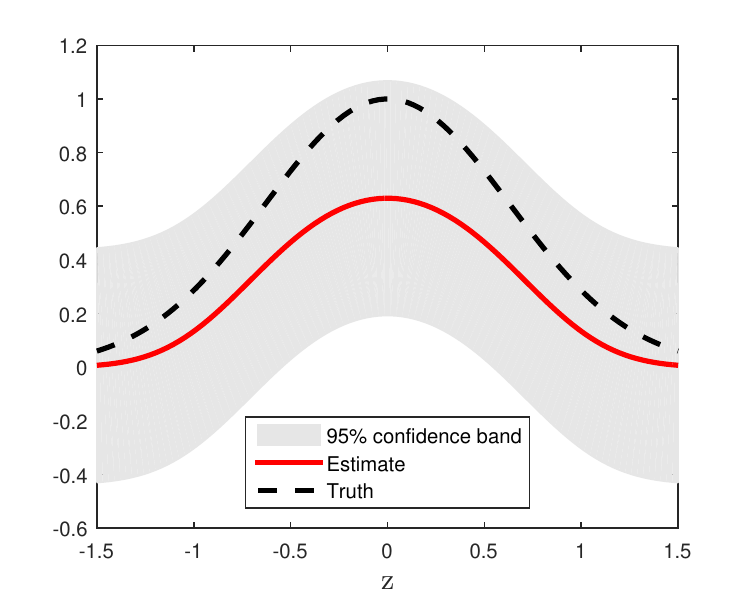}
		\caption{$n = 1000,\hat\gamma = 0.89$}
	\end{subfigure}
	\begin{subfigure}[b]{0.5\textwidth}
		\includegraphics[width=\textwidth]{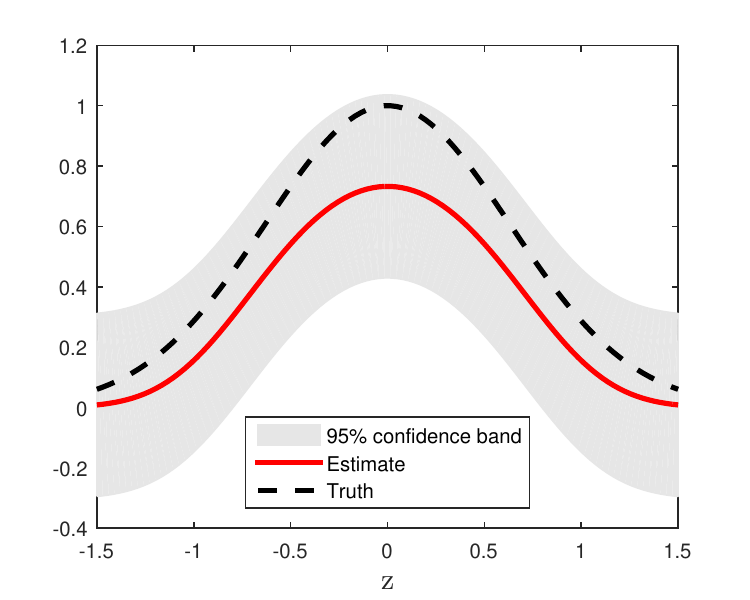}
		\caption{$n = 5000,\hat\gamma = 0.94$}
	\end{subfigure}
	\caption{Concentration-based uniform confidence bands, average over $5000$ experiments. $\hat\gamma$ is the MC coverage probability.}
	\label{fig:1_2}
\end{figure}

\paragraph{Exogenous covariates.}
In the presence of additional exogenous covariates, one can consider several avenues. As an example, suppose that one is interested in estimating the demand function, so $Z=(Z_1,Z_2)$ with $Z_1$ denoting the endogenous price and $Z_2$ denoting exogenous covariates, such as income and other control variables. One way to accommodate the exogenous covariates is to estimate the semiparametric partially linear specification allowing for non-linearities in prices, see \cite{yatchew2001household} and \cite{florens2015instrumental}.

Alternatively, one may want to keep the fully non-parametric specification. In this case, the model $Y=\varphi(Z_1,Z_2)+U,\E[U|Z_2,W]=0$ leads to the following family of ill-posed integral equations
\begin{equation*}
	\E[Y|Z_2=z_2,W=w]f_{Z_2,W}(z_2,w) = \int\varphi(z_1,z_2)f_{Z_1,Z_2,W}(z_1,z_2,w)\dx z_1.
\end{equation*}
For any fixed value of $z_2$, we are back to our setting, and our uniform inferential methods would allow us to construct uniform confidence sets for $z_1\mapsto \varphi(z_1,z_2)$ for any fixed value of $z_2$. Uniform inference for the demand function for different income levels might be sufficient to reveal the amount of non-linearities and/or endogeneity. Constructing the joint uniform confidence set for the bivariate function $\varphi$ and the theoretical analysis of such sets is beyond the scope of the present paper.

\section{Engel curves in the United States}\label{sec:engel}
In this section we estimate Engel curves using the NPIV approach and construct uniform confidence sets using the bootstrap. Engel curves describe how the demand for a commodity changes while the household's budget increases. Estimation of Engel curves is fundamental for the analysis of consumer behavior and has implications in different fields of empirical research. Interesting applications include the measurement of welfare losses associated with tax distortions in \cite{banks1997quadratic}, estimation of growth and inflation in \cite{nakamura2014chinese}, or estimation of income inequality across countries in \cite{almaas2012international}.

Previously \cite{blundell2007semi} estimated a shape-invariant system of Engel curves for food, alcohol, and fuel on UK data with a sieve approach. For simplicity, we focus on the non-parametric specification of the Engel curve.

Our dataset is drawn from the 2015 US Consumer Expenditure Survey data, and we estimate Engel curves for a set of goods, including food, tobacco, alcohol, gas and oil, and health. In our subsample, we have married couples with positive income during the past 12 months, including households with and without children. The dependent variable is the share of expenditures on the particular commodity. The log of total expenditures on non-durable goods is used as an independent variable. As in \cite{blundell2007semi} we instrument the log of total expenditures with income before tax. However, in contrast to \cite{blundell2007semi}, we also estimate Engel curves for tobacco and health care.

We plot NPIV estimates with a $95\%$ uniform confidence bands in Fig.~\ref{fig:3}. We also plot the local linear estimator, which does not correct for the endogeneity bias. Estimates in Figure~\ref{fig:3} illustrate that there is significant endogeneity bias in the Engel curves. Indeed, the local polynomial estimator is often outside the confidence band. For instance, the local polynomial estimator largely overestimates the shape of the Engel curve for higher budgets. In most of the cases the estimated Engel curves exhibit more curvature than suggested by the local polynomial estimator. While it may look like Engel curves slightly increase at the beginning, this observation is not statistically significant and most likely can be attributed to boundary effects.

\begin{figure}	
	\caption{Engel curves estimated with NPIV (solid blue line) with 95\% bootstrap confidence band and local polynomial estimator (dashed red line). Horizontal axis: total expenditures. Vertical axis: budget share.}
	\begin{subfigure}[b]{0.43\textwidth}
		\includegraphics[width=\textwidth]{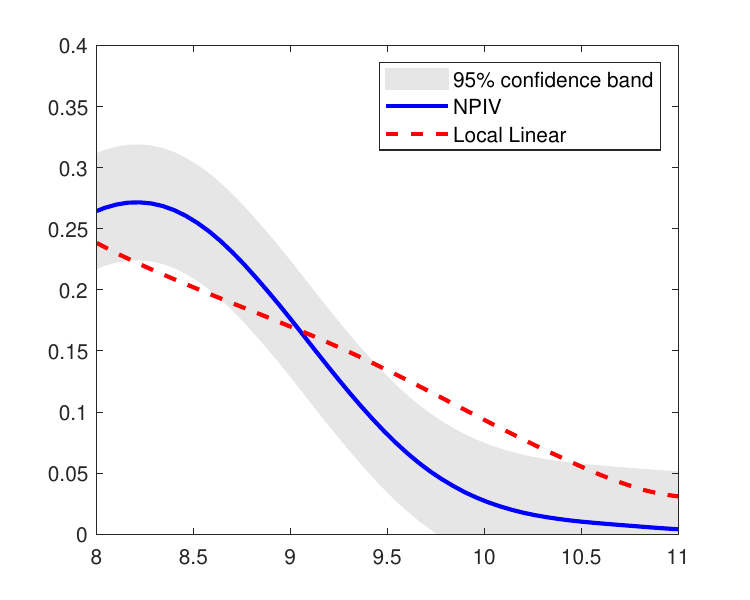}
		\caption{Food home}
	\end{subfigure}
	\begin{subfigure}[b]{0.43\textwidth}
		\includegraphics[width=\textwidth]{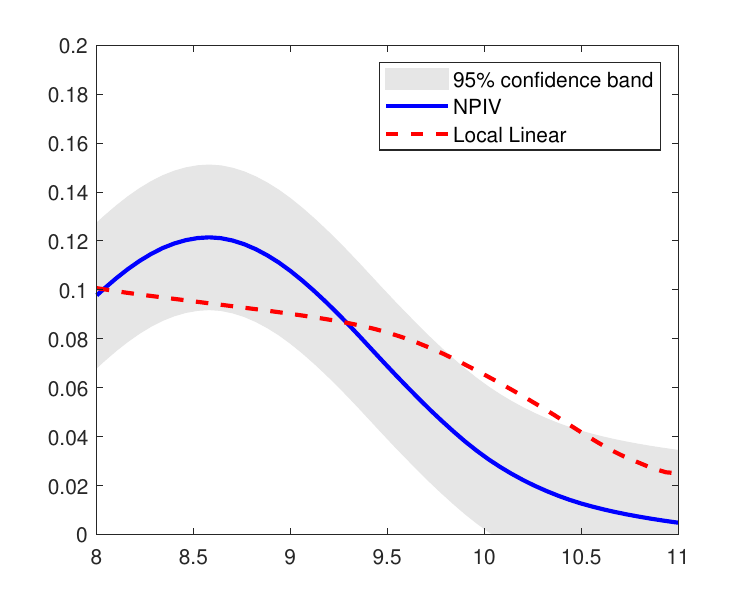}
		\caption{Food away}
	\end{subfigure}	
	\begin{subfigure}[b]{0.43\textwidth}
		\includegraphics[width=\textwidth]{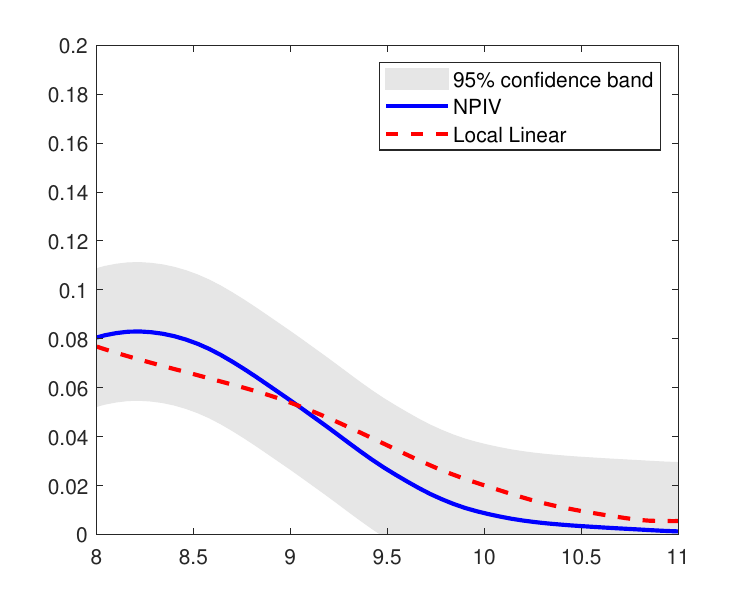}
		\caption{Tobacco}
	\end{subfigure}
	\begin{subfigure}[b]{0.43\textwidth}
		\includegraphics[width=\textwidth]{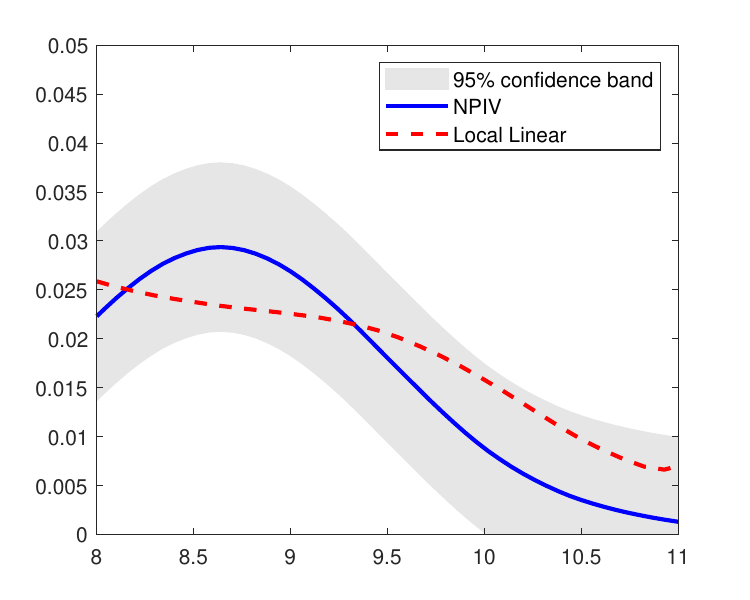}
		\caption{Alcohol}
	\end{subfigure}
	\begin{subfigure}[b]{0.43\textwidth}
		\includegraphics[width=\textwidth]{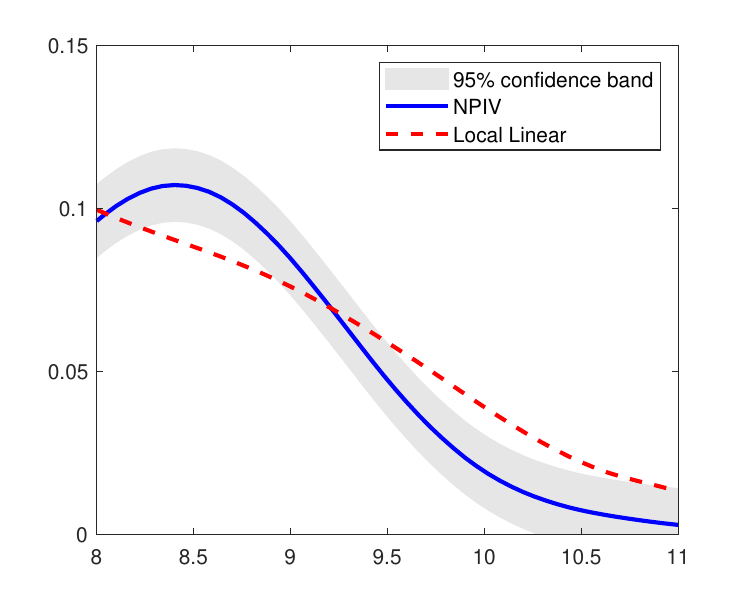}
		\caption{Gas and oil}
	\end{subfigure}\qquad\qquad\quad
	\begin{subfigure}[b]{0.43\textwidth}
		\includegraphics[width=\textwidth]{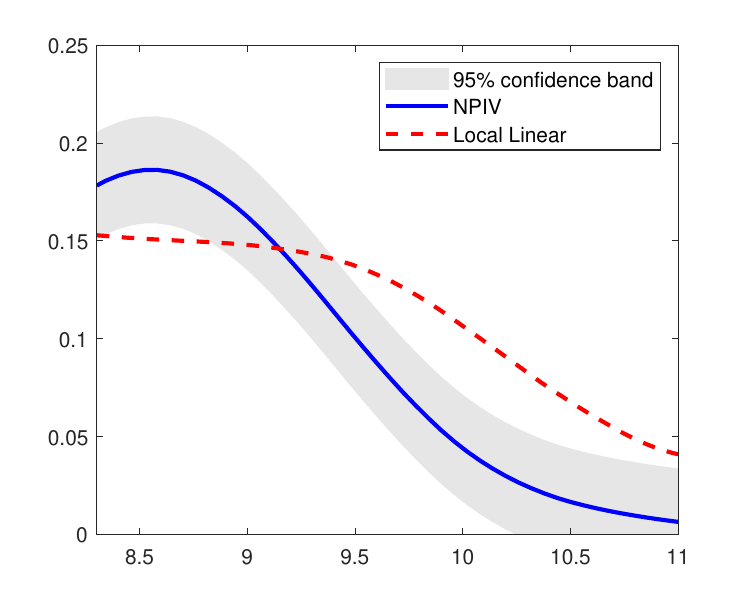}
		\caption{Health}
	\end{subfigure}
	\label{fig:3}
\end{figure}

\section{Conclusions}\label{sec:conclusion}
This paper studies uniform inferential methods in ill-posed models treated with Tikhonov regularization. Building uniform confidence sets in this setting is a difficult problem and requires approximating the distribution of the supremum of a complex empirical process. We show that it is not possible to establish functional convergence of this process for a very general class of ill-posed models known in econometrics and statistics. Nonetheless, we demonstrate that it is possible to obtain uniform inference relying on alternative methods.

We develop two approaches to uniform inference that lead to honest confidence sets. Honest confidence sets are of practical interest for several reasons. They ensure the existence of a certain sample size after which the coverage level will be not significantly smaller than the nominal coverage level, regardless of how complex the estimated function is within a given smoothness class. Moreover, it is widely recognized in non-parametric statistics, results for a fixed model can lead to inconsistent notions of optimality. Honest confidence sets, on the other hand, have the uniform validity.

The first approach developed in this paper relies on data-driven concentration inequality and allows for non-asymptotic characterization of coverage errors and expected diameters. These rates are new and have not been previously discussed in the literature. We also develop the asymptotic bootstrap approach to inference relying on the coupling inequalities of \cite{chernozhukov2016empirical} and show that the resulting confidence sets have desirable theoretical properties.

Both methods demonstrate good performance in Monte Carlo experiments, and it seems that bootstrap confidence sets are slightly more narrow for a comparable coverage level. On the other hand, concentration-based confidence sets are computationally less demanding and still reveal lots of information about the global shape properties of functional parameters. Lastly, unlike bootstrap and/or Gaussian approximation-based inference, concentration inequalities do not impose any restrictions on indexing classes of empirical processes. Therefore, this approach can be easily adapted to other complex nonparametric estimators.

\newpage

\bibliography{references}

\newpage
\setcounter{page}{1}
\setcounter{section}{0}
\setcounter{equation}{0}
\setcounter{table}{0}
\setcounter{figure}{0}
\renewcommand{\theequation}{A.\arabic{equation}}
\renewcommand\thetable{A.\arabic{table}}
\renewcommand\thefigure{A.\arabic{figure}}
\renewcommand\thesection{A.\arabic{section}}
\renewcommand\thepage{Appendix - \arabic{page}}
\renewcommand\thetheorem{A.\arabic{theorem}}

\begin{center}
	{\LARGE\textbf{APPENDIX}}	
\end{center}
\bigskip

\section{Concentration of the supremum of empirical processes}\label{app:concentration}
Let $(X_i)_{i\in\Nn}$ be a sequence of i.i.d. random variables taking values in some measure space $(S,\Sc)$, and let $\Hc$ be a countable class of real functions defined on $S$. Consider the empirical process
\begin{equation*}
	\nu_n(h) \equiv \frac{1}{n}\sum_{i=1}^nh(X_i) - \E h(X_i),\qquad h\in\Hc
\end{equation*}
and denote the symmetrized process by
\begin{equation*}
	\nu_n^\eta(h) = \frac{1}{n}\sum_{i=1}^n\eta_ih(X_i),
\end{equation*}
where $(\eta_i)_{i\in\Nn}$ is a sequence of i.i.d. Rademacher random variables, independent from $(X_i)_{i\in\Nn}$. We use $\|.\|_\Hc$ to denote the supremum norm over the class of functions $\Hc$.

\begin{proposition}\label{thm:symmetrization_inequality}
	The following two inequalities hold
	\begin{equation}\label{eq:symmetrization}
	2^{-1}\E \left\|\nu_n^\eta\right\|_\Hc - 2^{-1}n^{-1/2}\|Ph\|_\Hc \leq \E\|\nu_n\|_\Hc \leq 2\E \left\|\nu_n^\eta\right\|_\Hc.
	\end{equation}
\end{proposition}
The second inequality in Eq.~\ref{eq:symmetrization}, is a symmetrization inequality, while the first is the desymmetrization inequality. These inequalities allow us to establish uniform limit theorems and play an essential role in the empirical process theory, see \cite{van2000weak}. We refer to \cite{koltchinskii2006local}, p.7, and references therein for a proof of this modification of the symmetrization inequality.

Our construction of confidence sets relies on the concentration inequality for functions of bounded difference, also known as McDiarmid's inequality, see the excellent treatment of concentration inequalities in \cite{boucheron2013concentration}. Such concentration inequalities tell us that the probability that the supremum of the empirical process $\|\nu_n\|_\Hc$ deviates from its expected value $\E\|\nu_n\|_\Hc$ declines exponentially fast. While this expected value is unknown in practice, it can be estimated using the multiplier bootstrap with Rademacher multipliers. The symmetrization inequality allows us then to compare the expected value of this estimate $\|\nu_n^\eta\|_\Hc$ to the original expected value, which leads to the data-driven concentration inequality. This insightful idea comes from the statistical learning literature (see, e.g., \cite{koltchinskii2001rademacher}).

The next proposition states the precise version of the data-driven concentration inequality used in the present paper.
\begin{proposition}\label{thm:supremum_hoeffding}
	Let $(X_i)_{i=1}^n$, $(\eta_i)_{i=1}^n$, and $\Hc$ be defined as before. Suppose also that the absolute value of all $h\in\Hc$ is uniformly bounded by some $H$. Then for all $x>0$ and $n\in\Nn$
	\begin{equation*}
		\Pr\left(\|\nu_n\|_\Hc > 2\left\|\nu_n^\eta\right\|_\Hc + 3H\sqrt{\frac{2x}{n}}\right) \leq e^{-x}.
	\end{equation*}
\end{proposition}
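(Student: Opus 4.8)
The plan is to combine two ingredients: the bounded-differences (McDiarmid) inequality applied to the map $(x_1,\dots,x_n)\mapsto \|\nu_n\|_\Gc$ viewed as a function of the data, and the symmetrization inequality of Proposition~\ref{thm:symmetrization_inequality} to replace the unknown mean $\E\|\nu_n\|_\Gc$ by the data-driven quantity $2\|\nu_n^\varepsilon\|_\Gc$. First I would verify that $\|\nu_n\|_\Gc$ has bounded differences: if we change a single coordinate $X_i$ to $X_i'$, then for every $g\in\Gc$ the quantity $\frac1n\sum_j g(X_j)-\E g$ changes by at most $\frac1n\bigl(|g(X_i)|+|g(X_i')|\bigr)\le \frac{2F}{n}$, hence the supremum over $\Gc$ changes by at most $\frac{2F}{n}$. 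McDiarmid's inequality then yields, for every $t>0$,
\begin{equation*}
	\Pr\left(\|\nu_n\|_\Gc > \E\|\nu_n\|_\Gc + t\right) \le \exp\!\left(-\frac{2t^2}{\sum_{i=1}^n (2F/n)^2}\right) = \exp\!\left(-\frac{n t^2}{2F^2}\right).
\end{equation*}
Setting $t = F\sqrt{2x/n}$ makes the right-hand side equal to $e^{-x}$, so $\Pr\bigl(\|\nu_n\|_\Gc > \E\|\nu_n\|_\Gc + F\sqrt{2x/n}\bigr) \le e^{-x}$.

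The second step is to bound $\E\|\nu_n\|_\Gc$ above using the symmetrization half of Eq.~\ref{eq:symmetrization}, which gives $\E\|\nu_n\|_\Gc \le 2\E\|\nu_n^\varepsilon\|_\Gc$. This still involves an expectation over both the data and the Rademacher signs, so I would apply McDiarmid a second time, now to the function $(x_1,\dots,x_n)\mapsto \|\nu_n^\varepsilon\|_\Gc$ for fixed $\varepsilon$ (or equivalently to the joint vector $(X_i,\varepsilon_i)$): changing one coordinate $X_i$ perturbs $\frac1n\sum_j\varepsilon_j g(X_j)$ by at most $\frac1n(|g(X_i)|+|g(X_i')|)\le \frac{2F}{n}$, the same bounded-differences constant as before. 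This yields $\Pr\bigl(\E\|\nu_n^\varepsilon\|_\Gc > \|\nu_n^\varepsilon\|_\Gc + F\sqrt{2x/n}\bigr)\le e^{-x}$, hence $2\E\|\nu_n^\varepsilon\|_\Gc \le 2\|\nu_n^\varepsilon\|_\Gc + 2F\sqrt{2x/n}$ on an event of probability at least $1-e^{-x}$.

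The final step is to chain the two estimates. On the intersection of the two good events — which by the union bound has probability at least $1-2e^{-x}$ — we have
\begin{equation*}
	\|\nu_n\|_\Gc \le \E\|\nu_n\|_\Gc + F\sqrt{\tfrac{2x}{n}} \le 2\E\|\nu_n^\varepsilon\|_\Gc + F\sqrt{\tfrac{2x}{n}} \le 2\|\nu_n^\varepsilon\|_\Gc + 3F\sqrt{\tfrac{2x}{n}},
\end{equation*}
which is exactly the claimed inequality. The only mild subtlety is measurability/countability: because $\Gc$ is countable the suprema are genuine random variables and no outer-expectation machinery is needed, and the bounded-differences verification goes through pointwise in the supremand before taking the sup (using that $|\sup_g a_g - \sup_g b_g|\le \sup_g|a_g-b_g|$). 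I do not anticipate a serious obstacle; the main point requiring care is bookkeeping the constants so that the two $F\sqrt{2x/n}$ terms combine into $3F\sqrt{2x/n}$ rather than something larger, and making sure both applications of McDiarmid use the same per-coordinate bound $2F/n$ so the tail exponents align at $e^{-x}$ each.
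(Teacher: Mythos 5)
Your proof is correct and follows essentially the same route the paper takes: the result is quoted from \cite[Theorem 3.4.5]{gine2015mathematical}, whose argument is exactly the bounded-differences (McDiarmid) inequality applied to $\|\nu_n\|_\Gc$ and to $\|\nu_n^\varepsilon\|_\Gc$, combined with the symmetrization bound $\E\|\nu_n\|_\Gc\leq 2\E\|\nu_n^\varepsilon\|_\Gc$ of Proposition~\ref{thm:symmetrization_inequality} and a union bound, with the constants chaining to $3F\sqrt{2x/n}$ and $2e^{-x}$ just as you compute. The only nitpick is your parenthetical ``for fixed $\varepsilon$'': conditioning on $\varepsilon$ would center the deviation at $\E[\|\nu_n^\varepsilon\|_\Gc\mid\varepsilon]$ rather than the unconditional mean needed for desymmetrization, so the correct (and the one you actually use) application is McDiarmid in the i.i.d.\ coordinates $(X_i,\varepsilon_i)$.
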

See \cite{koltchinskii2011oracle}, Theorem 4.6, for the proof of this result.

\section{General ill-posed inverse problems}
\subsection{Impossibility of weak convergence}
We first discuss the impossibility of using the uniform central limit theorem to obtain the distribution of the estimator. 

To fix notation through the rest of the paper, let $P$ be a probability measure to any of the ill-posed models introduced in the previous section. The model is described by the function equation $r=T\varphi$, where $\varphi:[0,1]^p\to\R$ is an infinite dimensional parameter of interest. We aim to construct a random set $C_{n,1-\gamma}$ containing the function $\varphi$ with probability of at least $1-\gamma$ for $\gamma\in(0,1)$ and such that its diameter shrinks as the sample size increases at a specific rate. We focus on confidence sets for the Tikhonov-regularized estimator defined as follows
\begin{equation*}
	\hat{\varphi} = (\alpha_n I + \hat T^*\hat T)^{-1}\hat T^*\hat r,
\end{equation*}
where $\hat T,\hat T^*$, and $\hat r$ are appropriate estimators\footnote{If some operators are known, which is the case in the density deconvolution model, we replace estimators by known quantities.} and $\alpha_n$ is some positive sequence converging to zero as $n\to\infty$. This estimator belongs to the general family of spectral regularization schemes, see \cite{carrasco2007linear}.

For models considered in this paper, the dominating stochastic component of the Tikhonov-regularized estimator is driven by a sequence of i.i.d. centered random functions
\begin{equation}\label{eq:processes}
\nu_{n} = \frac{1}{n}\sum_{i=1}^n(\alpha_nI + T^*T)^{-1}T^*X_{ni}.
\end{equation}
Assume that $T$ is an integral operator with continuous kernel function, so that $T,T^*$, and $T^*T$ map to the space of continuous functions. Thus, we can also think of the operator $T$ as acting between $(C,\|.\|_\infty)$ spaces. By Lemma~\ref{lemma:infinity_to_hilbert} in the Appendix C, the operator $(\alpha_n I + T^*T)$ is invertible between spaces of continuous functions. Thus, trajectories of the process in Eq.~\ref{eq:processes} belong to the space $(C,\|.\|_\infty)$.

To build uniform confidence sets, we need to approximate the distribution of the supremum of this process. The most straightforward route to achieve this would be to establish its weak convergence to some Gaussian process and then to rely on quantiles of Gaussian suprema to build uniform confidence sets. Unfortunately, the process in Eq.~\ref{eq:processes} can not converge weakly as a random element in $(C,\|.\|_\infty)$ space. Indeed, we introduced regularization to smooth out discontinuities of the operator inversion. However, in the limit, as the regularization parameter tends to zero, we get back a discontinuous/unbounded operator. This discontinuity in the limit is a precise reason why the functional convergence fails.

More precisely, we can see that depending on the direction $\delta\in L_2$, the speed of convergence of inner products $\langle\hat\varphi - \varphi,\delta\rangle$ differs, see \cite{carrasco2007linear}, which makes it impossible to converge weakly in $L_2$. In Proposition~\ref{prop:impossibility} we show formally that the weak convergence in $(C,\|.\|_\infty)$ is impossible generalizing the result of \cite{cardot2007clt} who focus on the weak convergence in $L_2$ in the special case of the functional linear regression without instrumental variables. Note that our result holds for the arbitrary ill-posed inverse model, including the nonparametric IV, functional IV regressions, and the density deconvolution. Note also that our result holds regardless of what type of the estimator is used in practice, be it kernel or series estimators in the nonparametric IV model. In particular, it is worth comparing it to the impossibility of weak convergence of the kernel density estimator, where inner products do converge at the same speed, but the only compatible limiting process is a white noise process, as in \cite{ruymgaart1998note}.

\begin{proposition}\label{prop:impossibility}
	Suppose that the inverse of the operator $T^*T$ in Eq.~\ref{eq:processes} is unbounded. Then there does not exist a normalizing sequence $r_n$ such that $r_n\nu_{n}$ would converge weakly in $(C,\|.\|_\infty)$ to a non-degenerate random process.
\end{proposition}
\begin{proof}[Proof of Proposition~\ref{prop:impossibility}]
	Recall that for some normalizing sequence $r_n$, the weak convergence of $r_n\nu_{n}$ in $L_2$ to some random element requires $\langle r_n\nu_{n},\delta\rangle$ to converge weakly in $\R$ for all $\delta\in L_2$, \cite{van2000weak}, Theorem 1.8.4. Since $(T^*T)^{-1}$ is unbounded with $\Dc[(T^*T)^{-1}]\subset L_2[0,1]^p$, for $\delta\in \Dc[(T^*T)^{-1}]$, we need to set $r_n=n^{1/2}$, since
	\begin{equation*}
		\left\langle r_n\nu_{n},\delta\right\rangle = \frac{1}{\sqrt{n}}\sum_{i=1}^n\left\langle T^*X_{ni},(\alpha_n I + T^*T)^{-1}\delta\right\rangle\xrightarrow{d} N\left(0,\E\left\langle T^*X_1,(T^*T)^{-1}\delta\right\rangle^2\right).
	\end{equation*}
	On the other hand, if $\delta\not\in \Dc[(T^*T)^{-1}]$, $\|(\alpha_nI + T^*T)^{-1}\delta\|^2\to\infty$, making $\E\langle T^*X_1,(\alpha_nI + T^*T)^{-1}\delta\rangle^2\to\infty$, and so $\langle n^{1/2}\nu_{n},\delta\rangle$ can not converge in distribution. This shows that it is not possible to converge weakly in $L_2$. Since bounded and continuous functionals on $L_2$ are bounded and continuous on $(C,\|.\|_\infty)$ for finite measure spaces, it follows from the definition of weak convergence that $r_n\nu_{n}$ does not converge weakly in $(C,\|.\|_\infty)$ for any choice of the normalizing sequence $r_n$.
\end{proof}
Proposition~\ref{prop:impossibility} tells us that it is not possible to rely on the asymptotic approximation with the conventional central limit theorem. As a result, we need to rely on alternative approaches to inference.

\subsection{Bootstrap confidence sets}
Consider the empirical process
\begin{equation*}
	V_{n}(g) = \sqrt{n}(P_n-P)g,\qquad g\in\Gc_n = \left\{g_w:\; w\in[0,1]^q\right\}
\end{equation*}
indexed by some pointwise measurable pre-Gaussian class of functions $\Gc_n\subset L_2(P)$, where $L_2(P)$ is the set of square-integrable functions with respect to $P$ and $P_n$ is the empirical measure. We will always assume that the data are i.i.d. In our setting the empirical process $V_{n}$ can also be considered as a stochastic process
\begin{equation*}
	V_{n}(g_w) = \frac{1}{\sqrt{n}}\sum_{i=1}^nX_{ni}(w)
\end{equation*}
indexed by $w\in[0,1]^q$. Let
\begin{equation*}
	\|V_{n}\|_\infty \triangleq \|V_{n}\|_{\Gc_n} \triangleq \sup_{g\in\Gc_n}|V_{n}(g)|
\end{equation*}
be the supremum of the empirical process and let
\begin{equation*}
	\|\Gbb_n\|_\infty \triangleq \|\Gbb_n\|_{\Gc_n} \triangleq \sup_{g\in\Gc_n}|\Gbb_n(g)|
\end{equation*}
be the supremum of the tight centered Gaussian process with the same covariance structure as $V_{n}$, i.e., for all $g_1,g_2\in\Gc_n$
\begin{equation*}
	\begin{aligned}
		\E[\Gbb_n(g_1)\Gbb_n(g_2)] & = \E[V_{n}(g_1)V_{n}(g_2)] \\
		& = P(g_1-Pg_1)(g_2-Pg_2).
	\end{aligned}
\end{equation*}
We shall note that we will work only with processes having continuous trajectories so that all suprema can be restricted to countable sets of rational numbers, and whence are measurable.

Note that by the linearity
\begin{equation*}
	\sup_{g\in\Gc_n}\left|V_{n}(g)\right| = \sup_{g\in\Gc_n\cup-\Gc_n}V_{n}(g),
\end{equation*}
so we can always drop the absolute value from the supremum extending the supremum to the enlarged class of functions. A similar observation is valid for the supremum of the Gaussian process \cite{dudley2014uniform}, Theorem 2.1. We will typically work with suprema of absolute value and will use the above facts repeatedly.

Let $G_{n}$ be the envelope of the class $\Gc_n$, which we assume to be square-integrable and let $N(\Gc_n,\|.\|_{Q,2},\epsilon)$ be the $\epsilon$-covering of $(\Gc_n,\|.\|_{Q,2})$, with $\|g\|_{Q,q} = (Q|g|^q)^{1/q}$ for some probability measure $Q$. We assume that $\Gc_n$ is a VC-type class of functions.
\begin{assumption}\label{as:vc_type}
	Suppose that the class $\Gc_n$ is such that
	\begin{equation*}
		\sup_{Q}N(\Gc_n,\|.\|_{Q,2},\epsilon\|G_{n}\|_{Q,2})\leq \left(\frac{A}{\epsilon}\right)^v,\qquad 0<\epsilon\leq 1
	\end{equation*}
	for some universal constants $A\geq e$ and $v\geq 1$, where the supremum is taken over all probability measures $Q$ with $0<\|G_{n}\|_{Q,2} <\infty$.
\end{assumption}

For an i.i.d. sequence of Gaussian random variables $(\varepsilon_i)_{i=1}^n$, consider the supremum of the bootstrapped process
\begin{equation*}
	\left\|V_{n}^\varepsilon\right\|_\infty = \left\|\frac{1}{\sqrt{n}}\sum_{i=1}^n\varepsilon_iX_{ni}\right\|_\infty.
\end{equation*}

The process $X_{ni}$ is typically unobserved. Let $\hat X_{ni}$ be its empirical counterpart estimated from the data and let $c_{1-\gamma}^*$ be $1-\gamma$ conditional on the data $\Dc_n$ quantile of the supremum of the bootstrapped process
\begin{equation*}
	\|\hat V_{n}^\varepsilon\|_\infty = \left\|\frac{1}{\sqrt{n}}\sum_{i=1}^n\varepsilon_i\hat X_{ni}\right\|_\infty,
\end{equation*}
where $(\varepsilon_i)_{i=1}^n$ are i.i.d. $N(0,1)$ random variables independent from the sample.

\begin{assumption}\label{as:variance_bounds}
	(i) there exist constants $0<\underline\sigma<\bar{\sigma}$ such that $\underline{\sigma}^2\leq \Var(X_{ni}(w)) \leq \bar{\sigma}^2$ for all $w\in[0,1]^q$; (ii) $\sup_{g\in\Gc_n}\|g\|_{P,k}^r\leq \sigma^2b_n^{r-2}<\infty,r=2,3,4$ and $\|G_{n}\|_{P,4}\leq b_n < \infty$ for some $b_n=O(n^c)$.
\end{assumption}
Assumptions~\ref{as:probability_rates} and \ref{as:variance_bounds} consist of several technical conditions needed for the Gaussian and bootstrap approximation of \cite{chernozhukov2014gaussian}.

\begin{assumption}\label{as:probability_rates}
	Let $\Pc$ be the class of models consisting of all probability distributions corresponding to the ill-posed model satisfying Assumptions~\ref{as:source_condition1}, \ref{as:vc_type}, \ref{as:variance_bounds}. Suppose additionally that $\Pc$ is such that
	\begin{equation*}
		\begin{aligned}
			(C1) &\quad \sup_{P\in\Pc}\E_P\left\|\hat T^* - T^*\right\|_{2,\infty}^2 = O(\delta_{1n}^2), \\
			(C2) &\quad \sup_{P\in\Pc}\E_P\left\|\hat V_n^\varepsilon - V^\varepsilon_n\right\|_\infty = O(\delta_{2n}), \\	
			(C3) &\quad \hat r - \hat T\varphi = \frac{u_n}{n}\sum_{i=1}^nX_{ni} + R_{1n},
		\end{aligned}
	\end{equation*}
	where $R_{1n}$ is such that $\sup_{P\in\Pc}\E_P\|R_{1n}\|^2 = O(\delta_{3n}^2)$ for some sequences $\delta_{1n},\delta_{2n},\delta_{3n}\to 0$.
\end{assumption}
Assumption (C3) is asymptotic linearization of residuals in the model. In some cases this expansion is exact and $R_{1n}$ is trivially zero. (C1) controls the estimation error in the operator, while (C2) controls the error in the bootstrap process.

We focus on bootstrap confidence sets described as
\begin{equation}\label{eq:boostrap_cs}
C_{n,1-\gamma}^* = \left\{\left[\hat \varphi(z) - q_n^*,\; \hat\varphi(z) + q_n^*\right]:\; z\in[0,1]^p\right\}
\end{equation}
with $q_n^* = \frac{c_{1-\gamma}^*\|\hat T^*\|_{2,\infty} + \log^{-1}n}{\alpha_nn^{1/2}}u_n$.

In the following theorem, we show that our bootstrap confidence sets are honest to $\Pc$ and characterize coverage errors explicitly. This result will be specialized to each particular model in the following sections.
\begin{theorem}\label{thm:bootstrap}
	Suppose that Assumptions~\ref{as:source_condition1}, \ref{as:vc_type}, \ref{as:variance_bounds}, and \ref{as:probability_rates} are satisfied and $b_n^4\log^7n/n\to 0$. Then
	\begin{equation*}
		\inf_{P\in\Pc}\Pr(\varphi\in C_{n,1-\gamma}^*) \geq 1 - \gamma - O_p\left(\delta_n\right) - o_p(1)
	\end{equation*}
	with
	\begin{equation*}
		\delta_n = \left(\alpha_n^{-1/2}\delta_{1n} + \alpha_n^{-1}\delta_{3n} + \alpha_n^{\beta\wedge 1}\right)u_n^{-1}\alpha_nn^{1/2}\log n + \delta_{2n}\log n.
	\end{equation*}
\end{theorem}
\begin{proof}
	Using Assumption~\ref{as:probability_rates} (C3), decompose
	\begin{equation*}
		\hat\varphi - \varphi = \hat\nu_n + (\alpha_nI + \hat T^*\hat T)^{-1}\hat T^*R_{1n} + R_{2n} + R_{3n}
	\end{equation*}
	with
	\begin{equation*}
		\begin{aligned}
			\hat\nu_n & = (\alpha_n I + \hat T^*\hat T)^{-1}\hat T^*\frac{u_n}{n}\sum_{i=1}^nX_{ni} \\
			R_{2n} & = \left[(\alpha_n I + \hat T^*\hat T)^{-1}\hat T^*\hat T - (\alpha_n I + T^*T)^{-1}T^*T\right]\varphi.  \\
			R_{3n} & = (\alpha_n I + T^*T)^{-1}T^*T\varphi - \varphi.
		\end{aligned}
	\end{equation*}
	By Lemma~\ref{lemma:xi_term} there exists a constant $C<\infty$ independent from $P\in\Pc$ such that
	\begin{equation*}
		\|R_{2n}\|_\infty \leq \frac{C}{\alpha_n^{1/2}}\left(\|\hat T^* - T^*\|_{2,\infty} + \|\hat T^* - T^*\|_{2,\infty}^2\right),
	\end{equation*}
	whence under Assumption~\ref{as:probability_rates} (C1)
	\begin{equation*}
		\sup_{P\in\Pc}\E_P\|R_{2n}\|_\infty = O\left(\frac{\delta_{1n} + \delta_{1n}^2}{\alpha_n^{1/2}}\right).
	\end{equation*}
	Next, under Assumption~\ref{as:source_condition1} by Proposition~\ref{prop:regularization_bias} there exists a constant $R$ independent from $P\in\Pc$ such that
	\begin{equation*}
		\|R_{3n}\|_\infty \leq R\alpha_n^{\beta\wedge 1}.
	\end{equation*}
	Lastly,
	\begin{equation*}
		\begin{aligned}
			\left\|(\alpha_n I + \hat T^*\hat T)^{-1}\hat T^*R_{1n}\right\|_\infty & = \left\|\hat T^*(\alpha_n I + \hat T\hat T^*)^{-1}R_{1n}\right\|_\infty \\
			& \leq \frac{\|\hat T^*\|_{2,\infty}}{\alpha_n}\|R_{1n}\|.
		\end{aligned}
	\end{equation*}
	
	Then under Assumption~\ref{as:probability_rates} by Markov's inequality
	\begin{equation*}
		\begin{aligned}
			& \Pr\left(\varphi\in C_{n,1-\gamma}^*\right) \\
			& = \Pr\left(\|\hat \varphi - \varphi\|_\infty\leq q_n^*\right) \\
			& \geq \Pr\left(\|\hat\nu_n\|_\infty + \frac{\|\hat T^*\|_{2,\infty}}{\alpha_n}\|R_{1n}\| + \|R_{2n}\|_\infty + \|R_{3n}\|_\infty\leq q_n^* \right) \\
			& \geq \Pr\left(\left\|V_{n}\right\|_\infty \leq c_{1-\gamma}^* + (2\|\hat T^*\|_{2,\infty}\log n)^{-1}\right) \\
			& \qquad - \Pr\left(\frac{\|\hat T^*\|_{2,\infty}}{\alpha_n}\|R_{1n}\| + \|R_{2n}\|_\infty + \|R_{3n}\|_\infty > \frac{u_n}{2\alpha_nn^{1/2}\log n}\right) \\
			& \geq \Pr\left(\left\|V_{n}\right\|_\infty \leq c_{1-\gamma}^* + (2\|\hat T^*\|_{2,\infty}\log n)^{-1}\right)\\
			& \qquad - \sup_{P\in\Pc}\E_P\left[\frac{\|\hat T^*\|_{2,\infty}}{\alpha_n}\|R_{1n}\| + \|R_{2n}\|_\infty + \|R_{3n}\|_\infty\right]u_n^{-1}\alpha_nn^{1/2}\log n \\
			& \geq \Pr\left(\left\|V_{n}\right\|_\infty \leq c_{1-\gamma}^* + (2\|\hat T^*\|_{2,\infty}\log n)^{-1}\right) \\
			& \qquad - O\left(\left(\alpha_n^{-1}\delta_{3n} + \alpha_n^{-1/2}\delta_{1n} + \alpha_n^{\beta\wedge 1}\right)u_n^{-1}\alpha_nn^{1/2}\log n\right). \\
		\end{aligned}
	\end{equation*}
	
	Under Assumptions~\ref{as:vc_type} and \ref{as:variance_bounds} by the coupling inequality in \cite{chernozhukov2016empirical}, Theorem 2.1, for any $P\in\Pc$ one can approximate the supremum of the empirical process $V_{n}$ by the supremum of the Gaussian process $\Gbb_n$ with the same covariance structure
	\begin{equation*}
		\begin{aligned}
			& \Pr\left(\|V_{n}\|_\infty\leq c^*_{1-\gamma} + (2\|\hat T^*\|_{2,\infty}\log n)^{-1}\right) \\
			& \geq \Pr\left(\left\|\Gbb_n\right\|_{\Gc_n} \leq c_{1-\gamma}^* + (2\|\hat T^*\|_{2,\infty}\log n)^{-1} -  C_1\left(\frac{b_n\log^{5/4} n}{n^{1/4}} + \frac{b_n^{1/3}\log n}{n^{1/6}}\right)\right) \\
			& \qquad - \Pr\left(\left|\|V_n\|_\infty - \|\Gbb_n\|_{\Gc_n}\right| > C_1\left(\frac{b_n\log^{5/4} n}{n^{1/4}} + \frac{b_n^{1/3}\log n}{n^{1/6}}\right) \right) \\
			& \geq \Pr\left(\left\|\Gbb_n\right\|_{\Gc_n} \leq c_{1-\gamma}^* + (2\|\hat T^*\|_{2,\infty}\log n)^{-1} -  C_1\left(\frac{b_n\log^{5/4} n}{n^{1/4}} + \frac{b_n^{1/3}\log n}{n^{1/6}}\right)\right) - o(1),
		\end{aligned}
	\end{equation*}
	where $C_1$ is some absolute constants and the $o(1)$ term does not depend on $P\in\Pc$. Next, for some sequence of polynomial order $\delta_{4n}$ (to be specified below), put
	\begin{equation*}
		\begin{aligned}
			\delta_n^I & = \delta_{4n} + \frac{b_n\log^{5/4} n}{n^{1/4}} + \frac{b_n^{1/3}\log n}{n^{1/6}} \\
			\delta_n^{II} & = \left(\alpha_n^{-1}\delta_{3n} + \alpha_n^{-1/2}\delta_{1n} + \alpha_n^{\beta\wedge 1}\right)u_n^{-1}\alpha_nn^{1/2}\log n.
		\end{aligned}
	\end{equation*}
	Then
	\begin{equation*}
		\begin{aligned}
			\Pr\left(\varphi\in C^*_{n,1-\gamma}\right)
			& \geq \Pr\left(\|\Gbb_n\|_{\Gc_n} \leq c^*_{1-\gamma} + (2\|\hat T^*\|_{2,\infty}\log n)^{-1} + \delta_{4n}\right) \\
			& - \sup_{x\in\R}\Pr\left(|\|\Gbb_n\|_{\Gc_n} - x|\leq \delta_n^{I}\right) - O\left(\delta^{II}_n\right) - o(1).
		\end{aligned}
	\end{equation*}
	Under Assumption~\ref{as:variance_bounds} (i) by the anti-concentration inequality in \cite{chernozhukov2014gaussian}, Lemma A.1, for every $\epsilon>0$
	\begin{equation*}
		\sup_{x\in\R}\Pr\left(\left|\|\Gbb_n\|_{\Gc_n} - x\right|\leq \epsilon\right)\leq C_2\epsilon\left(\E\|\Gbb_n\|_{\Gc_n} + \sqrt{1\vee \log(\underline{\sigma}/\epsilon)}\right),
	\end{equation*}
	where the constant $C_2$ depends only on $\underline{\sigma}$ and $\bar\sigma$.
	
	Moreover, under Assumption~\ref{as:variance_bounds} by the Dudley-Sudakov's entropy bound\footnote{Note that $N(\Gc_n,\|.\|_{P,2},\epsilon)=1$ for all $\epsilon\geq \mathrm{diam}(\Gc_n)/2$ and $\mathrm{diam}(\Gc_n)\leq 2\sup_{g\in\Gc_{n}}\|g\|_{P,2}\leq 2\bar\sigma$.}, see e.g. \cite{dudley2016vn}
	\begin{equation*}
		\begin{aligned}
			\E\|\Gbb_n\|_{\Gc_n} & \leq 24\int_0^{\bar{\sigma}}\sqrt{\log N(\Gc_n,\|.\|_{P,2},\epsilon)}\dx \epsilon =O\left(\log^{1/2} n\right),
		\end{aligned}
	\end{equation*}
	where the last equality follows under Assumptions~\ref{as:vc_type} and \ref{as:variance_bounds} (ii).
	
	Next under Assumptions~\ref{as:vc_type} and \ref{as:variance_bounds} by the coupling inequality in \cite{chernozhukov2016empirical}, Theorem 2.2, we approximate the supremum of the Gaussian process by the supremum of the multiplier bootstrap process. To that end for some constant $C_3$ independent from $P\in\Pc$
	\begin{equation*}
		\begin{aligned}			
			&\Pr\left(\|\Gbb_n\|_\infty \leq c^*_{1-\gamma} + (2\|\hat T^*\|_{2,\infty}\log n)^{-1} + C_3\left(\frac{b_n\log^{5/4} n}{n^{1/4}} + \frac{b_n^{1/2}\log n}{n^{1/4}} \right)\right) \\
			& \geq \Pr\left(\|V_{n}^\varepsilon\|_\infty \leq c^*_{1-\gamma} + (2\|\hat T^*\|_{2,\infty}\log n)^{-1}| \Dc_n\right) - o_p(1) \\
			& \geq \Pr\left(\|\hat V_{n}^\varepsilon\|_\infty \leq c^*_{1-\gamma}|\Dc_n\right) - 2\|\hat T^*\|_{2,\infty}\E\left[\left\|\hat V_n^\varepsilon - V_n^\varepsilon\right\|_\infty|\Dc_n\right]\log n - o_p(1) \\
		\end{aligned}
	\end{equation*}
	where we use Markov's inequality, $\Dc_n$ denotes the data, and the $o_p(1)$ term does not depend on $P\in\Pc$. Setting $\delta_{4n} = C_3\left(\frac{b_n\log^{5/4} n}{n^{1/4}} + \frac{b_n^{1/2}\log n}{n^{1/4}}\right)$ under Assumption~\ref{as:probability_rates} (C2) we obtain
	\begin{equation*}
		\Pr\left(\varphi\in C^*_{n,1-\gamma}\right) \geq 1-\gamma - O_p\left(\delta_n^I\log^{1/2} n + \delta_n^{II} + \delta_{2n}\log n\right) - o_p(1),
	\end{equation*}
	whence the result follows under the assumption $b_n^4\log^7n/n\to 0$.
\end{proof}

To characterize expected diameters of bootstrap confidence sets we need additional assumptions.
\begin{assumption}\label{as:bootstrap_cs_diameters}
	Suppose that (i) conditionally on the data $\|\hat V_n^\varepsilon\|_\infty$ is a supremum over the class of functions satisfying Assumption~\ref{as:vc_type};
	\begin{equation*}
		(ii)\;\sup_{P\in\Pc}\max_{1\leq i\leq n}\E_P\|\hat X_{ni} - X_{ni}\|^2_\infty = O(1);\qquad (iii)\; \sup_{P\in\Pc}\max_{1\leq i\leq n}\E_P\|X_{ni}\|^2_\infty = O(u_n^2),
	\end{equation*}
	and $u_n\to \infty$.
\end{assumption}
\begin{theorem}\label{thm:exp_diam_bootstrap}
	Suppose that Assumption~\ref{as:bootstrap_cs_diameters} and assumptions of Theorem~\ref{thm:bootstrap} are satisfied. If $\delta_{1n}=O(1)$, then
	\begin{equation*}
		\left|C^*_{n,1-\gamma}\right| = O_p\left(\frac{u_n^2}{\alpha_nn^{1/2}}\right)
	\end{equation*}
	uniformly over $P\in\Pc$.
\end{theorem}
\begin{proof}[Proof of Theorem~\ref{thm:exp_diam_bootstrap}]
	First note that
	\begin{equation*}
		\begin{aligned}
			\left|C_{n,1-\gamma}^*\right|_\infty & = 2\left|q_n^*\right| \\
			& = \frac{2u_n}{\alpha_nn^{1/2}}\left\{\left[c^*_{1-\gamma}\|\hat T^*\|_{2,\infty}\right] + \log^{-1}n \right\}.
		\end{aligned}
	\end{equation*}
	Recall that $c^*_{1-\gamma}$ is $1-\gamma$ conditional on the data $\Dc_n$ quantile of the supremum of the Gaussian process $\|\hat V_n^\varepsilon\|_\infty$. By the Gaussian concentration inequality, e.g., see \cite{boucheron2013concentration}, Theorem 5.8, conditionally on the data
	\begin{equation*}
		c_{1-\gamma}^* \leq \E_P\left[\|\hat V_n^\varepsilon\|_\infty | \Dc_n\right] + \sqrt{\frac{2}{n}\sum_{i=1}^n\|\hat X_{ni}\|_\infty^2\log(1/\gamma)}.
	\end{equation*}
	By Dudley-Sudakov's inequality, e.g., see \cite{dudley2016vn}, and the change of variables under Assumption~\ref{as:vc_type}
	\begin{equation*}
		\E_P\left[\|\hat V_n^\varepsilon\|_\infty | \Dc_n\right] \leq 24\sqrt{\frac{1}{n}\sum_{i=1}^n\|\hat X_{ni}\|_\infty^2}\int_0^1\sqrt{v\log\frac{A}{\varepsilon}}\dx\varepsilon.
	\end{equation*}
	The result follows under Assumption~\ref{as:probability_rates} (C1).
\end{proof}

\subsection{Concentration-based confidence sets}
The confidence sets based on the concentration inequality are described as
\begin{equation*}
	C_{n,1-\gamma} = \left\{\left[\hat \varphi(z) - \hat q_n,\; \hat\varphi(z) + \hat q_n\right]:\; z\in[0,1]^p\right\},
\end{equation*}
where $\hat q_n = 2\left\|\hat \nu_{n}^\eta\right\|_\infty +  \frac{3\|\hat T^*\|_{2,\infty}G\sqrt{2\log(1/\gamma)} + \log^{-1} n}{\alpha_nn^{1/2}}u_n$ and
\begin{equation*}
	\left\|\hat \nu_{n}^\eta\right\|_\infty = \left\|\frac{u_n}{n}\sum_{i=1}^n(\alpha_n I + \hat T^*\hat T)^{-1}\hat T^*\eta_i\hat X_{ni}\right\|_\infty
\end{equation*}
denotes the estimate of the supremum of the symmetrized process. The operator norm $\|\hat T^*\|_{2,\infty}$ can be computed by Lemma~\ref{lemma:operator_to_kernel}, while $G$ is selected so that $\|X_{ni}\|\leq G$.

For concentration inequality we need a somewhat different version of Assumption~\ref{as:probability_rates}.
\begin{assumption}\label{as:probability_rates_ci}
	Let $\Pc$ be the class of models consisting of all probability distributions corresponding to the ill-posed model satisfying Assumption~\ref{as:source_condition1}. Suppose additionally that $\Pc$ is such that
	\begin{equation*}
		\begin{aligned}
			(C1) &\quad \sup_{P\in\Pc}\E_P\left\|\hat T^* - T^*\right\|_{2,\infty}^2 = O(\delta_{1n}^2), \\
			(C2) &\quad \sup_{P\in\Pc}\max_{1\leq i\leq n}\E_P\left\|\hat X_{ni} - X_{ni}\right\|^2 = O(\delta_{2n}^2), \\	
			(C3) &\quad \hat r - \hat T\varphi = \frac{u_n}{n}\sum_{i=1}^nX_{ni} + R_{1n},
		\end{aligned}
	\end{equation*}
	where $R_{1n}$ is a remainder term such that $\sup_{P\in\Pc}\E_P\|R_{1n}\|^2 = O(\delta_{3n}^2)$ and $\delta_{1n},\delta_{2n},\delta_{3n}\to 0$ are some sequences.
\end{assumption}

\begin{theorem}\label{thm:main_ci}
	Suppose that Assumption~\ref{as:probability_rates_ci} is satisfied and $\|X_{ni}\|\leq G$ for some universal constant $G$ for all $P\in\Pc$. Then
	\begin{equation*}
		\inf_{P\in\Pc}\Pr\left(\varphi\in C_{n,1-\gamma}\right) \geq 1 - \gamma - O(\delta_n)
	\end{equation*}
	with $\delta_n = \left(\frac{\delta_{3n}}{\alpha_n} + \frac{\delta_{1n}}{\alpha_n^{1/2}} + \alpha_n^{\beta\wedge 1}\right)u_n^{-1}\alpha_nn^{1/2}\log n + \delta_{2n}\log n$.
\end{theorem}
\begin{proof}[Proof of Theorem~\ref{thm:main_ci}]
	Under Assumption~\ref{as:probability_rates_ci} (C3), decompose
	\begin{equation*}
		\hat\varphi - \varphi = \hat\nu_n + (\alpha_nI + \hat T^*\hat T)^{-1}\hat T^*R_{1n} + R_{2n} + R_{3n}
	\end{equation*}
	with
	\begin{equation*}
		\begin{aligned}
			\hat\nu_n & = (\alpha_n I + \hat T^*\hat T)^{-1}\hat T^*\frac{u_n}{n}\sum_{i=1}^nX_{ni} \\
			R_{2n} & = \left[(\alpha_n I + \hat T^*\hat T)^{-1}\hat T^*\hat T - (\alpha_n I + T^*T)^{-1}T^*T\right]\varphi.  \\
			R_{3n} & = (\alpha_n I + T^*T)^{-1}T^*T\varphi - \varphi.
		\end{aligned}
	\end{equation*}
	Recall that $\hat q_n = 2\|\nu_n^\eta\|_\infty + \frac{3\|\hat T^*\|_{2,\infty}G\sqrt{2\log(1/\gamma)} + \log^{-1} n}{\alpha_n n^{1/2}}u_n$ and we also denote
	\begin{equation*}
		\begin{aligned}
			\nu_n & = (\alpha_n I + T^*T)^{-1}T^*\frac{u_n}{n}\sum_{i=1}^nX_{ni}, \\
			\nu_n^\eta & = (\alpha_n I + T^*T)^{-1}T^*\frac{u_n}{n}\sum_{i=1}^n\eta_iX_{ni},
		\end{aligned}
	\end{equation*}
	where $\varepsilon_i$ are i.i.d. Rademacher random variables. Then by the triangle and Markov's inequalities
	\begin{equation}\label{eq:thm_main1.1}
	\begin{aligned}
	& \Pr\left(\varphi\in C_{n,1-\gamma}\right) \\
	& =  \Pr\left(\|\hat \varphi - \varphi\|_\infty\leq \hat q_{n}\right) \\
	& \geq \Pr\left(\|\hat \nu_{n}\|_\infty + \frac{\|\hat T^*\|_{2,\infty}}{\alpha_n}\|R_{1n}\| + \|R_{2n}\|_\infty + \|R_{3n}\|_\infty \leq \hat q_n\right) \\
	& \geq \Pr\left(\|\nu_{n}\|_\infty + \left\|\hat \nu_{n} - \nu_{n}\right\|_\infty + \frac{\|\hat T^*\|_{2,\infty}}{\alpha_n}\|R_{1n}\| + \|R_{2n}\|_\infty + \|R_{3n}\|_\infty\leq \hat q_n\right) \\
	& \geq \Pr\left(\|\nu_{n}\|_\infty \leq 2\|\nu_n^\eta\|_\infty + \frac{3\|T^*\|_{2,\infty}G\sqrt{2\log(1/\gamma)}}{\alpha_nn^{1/2}}u_n\right) \\
	& \qquad - \Pr\left(\left\|\hat \nu_{n} - \nu_{n}\right\|_\infty + \frac{\|\hat T^*\|_{2,\infty}}{\alpha_n}\|R_{1n}\| + \|R_{2n}\|_\infty + \|R_{3n}\|_\infty > \frac{u_n}{2\alpha_nn^{1/2}\log n}\right) \\
	& \qquad - \Pr\left(\left\|\hat \nu_{n}^\eta - \nu_{n}^\eta\right\|_\infty + \frac{3G\sqrt{2\log(1/\gamma)}}{\alpha_nn^{1/2}}u_n\|\hat T^* - T^*\|_{2,\infty} > \frac{u_n}{2\alpha_nn^{1/2}\log n}\right) \\
	& \geq \Pr\left(\|\nu_{n}\|_\infty \leq 2\|\nu_n^\eta\|_\infty + \frac{3\|T^*\|_{2,\infty}G\sqrt{2\log(1/\gamma)}}{\alpha_nn^{1/2}}u_n\right) \\
	& \qquad - 2\sup_{P\in\Pc}\E_P\left[\left\|\hat \nu_{n} - \nu_{n}\right\|_\infty + \frac{\|\hat T^*\|_{2,\infty}}{\alpha_n}\|R_{1n}\| + \|R_{2n}\|_\infty + \|R_{3n}\|_\infty\right]u_n^{-1}\alpha_nn^{1/2}\log n \\
	& \qquad - 2\sup_{P\in\Pc}\E_P\left[\left\|\hat \nu_{n}^\eta - \nu_{n}^\eta\right\|_\infty u_n^{-1}\alpha_nn^{1/2}\log n + 3G\sqrt{2\log(1/\gamma)}\|\hat T^* - T^*\|_{2,\infty}\log n\right]. \\
	\end{aligned}
	\end{equation}
	Under Assumption~\ref{as:probability_rates_ci} (C1)-(C2) by Lemma~\ref{lemma:nu}
	\begin{equation*}
		\begin{aligned}
			\sup_{P\in\Pc}\E_P\left\|\hat \nu_{n} - \nu_{n}\right\|_\infty & = O\left(\frac{u_n\delta_{1n}}{\alpha_n^{3/2}n^{1/2}}\right), \\
			\sup_{P\in\Pc}\E_P\left\|\hat \nu_{n}^\eta - \nu_{n}^\eta\right\|_\infty & = O\left(\frac{u_n\delta_{1n}}{\alpha_n^{3/2}n^{1/2}} + \frac{u_n\delta_{2n}}{\alpha_nn^{1/2}}\right).
		\end{aligned}
	\end{equation*}
	Therefore, another application of Assumption~\ref{as:probability_rates_ci} and the order of $R_{2n}$ and $R_{3n}$ from the proof of Theorem~\ref{thm:bootstrap} imply
	\begin{equation*}
		\begin{aligned}
			\Pr\left(\varphi\in C_{n,1-\gamma}\right) & \geq \Pr\left(\|\nu_{n}\|_\infty \leq 2\|\nu_n^\eta\|_\infty + \frac{3\|T^*\|_{2,\infty}G\sqrt{2\log(1/\gamma)}}{\alpha_nn^{1/2}}u_n\right) \\
			& \qquad - O\left(\left(\frac{\delta_{3n}}{\alpha_n} + \frac{\delta_{1n}}{\alpha_n^{1/2}} + \alpha_n^{\beta\wedge 1}\right)u_n^{-1}\alpha_nn^{1/2}\log n + \delta_{2n}\log n \right).
		\end{aligned}
	\end{equation*}
	
	Next, $\|\nu_{n}\|_\infty$ is the supremum of empirical processes indexed by the class of functions
	\begin{equation*}
		\Hc_n = \left\{h:x\in C[0,1]^q\to u_n\left[(\alpha_nI + T^*T)^{-1}T^*x\right](z),\;z\in [0,1]^p\right\}.
	\end{equation*}
	
	For any $h\in\Hc_n$
	\begin{equation*}
		\|h(X_{ni})\|_\infty \leq u_n\|T^*\|_{2,\infty}\|(\alpha_nI + TT^*)^{-1}\|\|X_{ni}\| \leq \frac{\|T^*\|_{2,\infty}}{\alpha_n}Gu_n \triangleq H_n. 
	\end{equation*}
	So $H_n$ is the envelope for the class $\Hc_n$. Then by Proposition~\ref{thm:supremum_hoeffding}
	\begin{equation*}
		\begin{aligned}
			\Pr(\|\nu_n\|_\infty\leq \hat q_n) & = \Pr\left(\|\nu_n\|_\infty\leq 2\|\nu_n^\eta\|_\infty + 3H_n\sqrt{\frac{2\log(1/\gamma)}{n}} \right) \\
			& \geq 1 - \gamma,
		\end{aligned}
	\end{equation*}
	whence the result.
\end{proof}

The next result describes the convergence rate of the expected diameter of the confidence set based on the concentration inequality.
\begin{theorem}\label{thm:exp_diam_ci}
	Suppose that assumptions of Theorem~\ref{thm:main_ci} are satisfied. If $\delta_{1n}/\alpha_n^{1/2}=O(1)$, then
	\begin{equation*}
		\sup_{P\in\Pc}\E_P\left|C_{n,1-\gamma}\right|_\infty = O\left(\frac{u_n}{\alpha_n n^{1/2}}\right).
	\end{equation*}
\end{theorem}
\begin{proof}[Proof of Theorem~\ref{thm:exp_diam_ci}]
	Since $\E_P\left|C_{n,1-\gamma}\right|_\infty = 2\E_P \left|\hat q_n\right|$, under assumptions of Theorem~\ref{thm:main_ci}, there exists some universal constant $C$ such that
	\begin{equation*}
		\E_P\left|C_{n,1-\gamma}\right|_\infty \leq C\left\{\E_P\|\nu_n^\eta\|_\infty + \E_P\|\hat \nu_n^\eta - \nu_n^\eta\|_\infty + \frac{u_n}{\alpha_nn^{1/2}}\E_P\|\hat T^*\|_{2,\infty} \right\}.
	\end{equation*}
	By Jensen's inequality, since $\|X_{ni}\|\leq C$
	\begin{equation*}
		\begin{aligned}
			\E_P\|\nu_n^\eta\|_\infty & \leq \frac{\|T^*\|_{2,\infty}}{\alpha_n}\E_P\left\|\frac{u_n}{n}\sum_{i=1}^n\eta_iX_{ni}\right\| \\
			& \leq \frac{u_n\|T^*\|_{2,\infty}}{\alpha_nn^{1/2}}\sqrt{\E_P\|X_{ni}\|^2} \\
			& \leq \frac{u_n\|T^*\|_{2,\infty}}{\alpha_nn^{1/2}}G.
		\end{aligned}
	\end{equation*}
	Therefore by Lemma~\ref{lemma:nu} under Assumptions~\ref{as:source_condition1} and \ref{as:probability_rates_ci} (C1)-(C2)
	\begin{equation*}
		\sup_{P\in\Pc}\E_P\left|C_{n,1-\gamma}\right|_\infty = O\left(\frac{u_n}{\alpha_n n^{1/2}} + \frac{u_n\delta_{1n}}{\alpha_n^{3/2}n^{1/2}} + \frac{u_n\delta_{2n}}{\alpha_nn^{1/2}}\right).
	\end{equation*}
\end{proof}

\section{NPIV, functional regressions, and density deconvolution}
\subsection{NPIV model}
\begin{proposition}\label{prop:rate_of_density}
	Suppose that Assumption~\ref{as:data_npiv} (i)-(iii) is satisfied and the sequence of bandwidth parameters is such that $1/(nh_n^{p+q})=O(1)$, then for all $1<r<\infty$
	\begin{equation*}
		\left(\E\left\|\hat T^* - T^*\right\|_{2,\infty}^r\right)^{1/r} \leq  C\left\{\sqrt{\frac{\log h_n^{-1}}{nh_n^{p+q}}} + h_n^{s}\right\},
	\end{equation*}
	where the constant $C$ depends only on $s,M,r,\bar f,\|k\|_\infty$.
\end{proposition}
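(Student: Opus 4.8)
The plan is to reduce the operator-norm bound to the standard uniform (sup-norm) rate for the kernel estimator of the joint density $f_{ZW}$, and then to control that rate through a deterministic bias term and a uniform stochastic term. Since $\hat T^{*}$ and $T^{*}$ are integral operators on $[a,b]^{q}$ with kernels $\hat f_{ZW}(z,\cdot)$ and $f_{ZW}(z,\cdot)$, the difference $\hat T^{*}-T^{*}$ has kernel $\hat f_{ZW}-f_{ZW}$; by Cauchy--Schwarz, $\big|[(\hat T^{*}-T^{*})\psi](z)\big|\le\|\hat f_{ZW}(z,\cdot)-f_{ZW}(z,\cdot)\|\,\|\psi\|$ for every $\psi\in L_{2}[a,b]^{q}$ and $z\in[a,b]^{p}$, so by Lemma~\ref{lemma:operator_to_kernel} and $\|g\|\le(b-a)^{q/2}\|g\|_{\infty}$ on $[a,b]^{q}$,
\[
	\big\|\hat T^{*}-T^{*}\big\|_{2,\infty}\;\le\;\sup_{z\in[a,b]^{p}}\big\|\hat f_{ZW}(z,\cdot)-f_{ZW}(z,\cdot)\big\|\;\le\;(b-a)^{q/2}\,\big\|\hat f_{ZW}-f_{ZW}\big\|_{\infty},
\]
and it suffices to prove $\big(\E\|\hat f_{ZW}-f_{ZW}\|_{\infty}^{r}\big)^{1/r}=O\big(\sqrt{\log h_{n}^{-1}/(nh_{n}^{p+q})}+h_{n}^{s}\big)$ for every fixed $r\in(1,\infty)$.

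Next I would split $\hat f_{ZW}-f_{ZW}$ into the deterministic bias $\E\hat f_{ZW}-f_{ZW}$ and the stochastic part $\hat f_{ZW}-\E\hat f_{ZW}$. The bias is $O(h_{n}^{s})$ by the usual change of variables: $f_{ZW}\in C_{L}^{s}[a,b]^{p+q}$ with $s\in(0,1]$ gives a H\"older remainder of order $h_{n}^{s}(\|u\|^{s}+\|v\|^{s})$, which integrates against $|K_{z}|\,|K_{w}|$ to a finite constant times $h_{n}^{s}$ by the moment conditions in Assumption~\ref{as:data_npiv}(iii) (the higher kernel order, needed elsewhere for the $\varphi$-dependent terms, is harmless here). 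For the stochastic part I would write $\hat f_{ZW}(z,w)-\E\hat f_{ZW}(z,w)=\tfrac1n\sum_{i=1}^{n}\big(g_{z,w}(Z_{i},W_{i})-\E g_{z,w}\big)$ with $g_{z,w}=h_{n}^{-(p+q)}K_{z}(h_{n}^{-1}(\cdot-z))K_{w}(h_{n}^{-1}(\cdot-w))$, and note that $\Gc_{n}=\{g_{z,w}:(z,w)\in[a,b]^{p+q}\}$ has an envelope of order $h_{n}^{-(p+q)}$, second moments $\E g_{z,w}^{2}=O(h_{n}^{-(p+q)})$ (same change of variables and boundedness of $f_{ZW}$), and---because the individual kernels have bounded $p$-variation, Assumption~\ref{as:data_npiv}(iii)---is a VC-type class whose uniform-entropy bound does not depend on $n$.

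The crux is the uniform stochastic estimate: a variance-sensitive Talagrand concentration inequality (in the spirit of Appendix~A; see \cite[Chapter~3]{gine2015mathematical}) together with the maximal inequality for VC-type classes gives $\E\|\hat f_{ZW}-\E\hat f_{ZW}\|_{\infty}=O\big(\sqrt{\log h_{n}^{-1}/(nh_{n}^{p+q})}\big)$, the lower-order term in the maximal inequality being kept in check by the bandwidth restriction $1/(nh_{n}^{p+q})=O(1)$; the exponential tails of the same inequality then upgrade this to the $L_{r}$-norm for any fixed $r$, since all moments of the supremum are then comparable to its mean up to $r$-dependent constants. Combining the operator reduction, the bias bound, and this stochastic bound completes the proof. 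The main obstacle is precisely this last step---getting the logarithmic factor and the moments right uniformly over $(z,w)$---which requires the empirical-process apparatus (a VC-type entropy bound for the scaled-kernel class and a Bousquet-type Talagrand inequality); an equivalent self-contained route is direct discretization, exploiting that $\hat f_{ZW}-\E\hat f_{ZW}$ has Lipschitz modulus $O(h_{n}^{-(p+q)-1})$ in $(z,w)$, so that a grid of $\mathrm{poly}(h_{n}^{-1})$ points with Bernstein's inequality and a union bound yields the same rate. The operator reduction and the bias computation are routine.
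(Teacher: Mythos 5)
Your proposal follows essentially the same route as the paper's proof: the same reduction of $\|\hat T^*-T^*\|_{2,\infty}$ to $\|\hat f_{ZW}-f_{ZW}\|_\infty$ via Lemma~\ref{lemma:operator_to_kernel} and Cauchy--Schwarz, the same bias computation of order $h_n^s$, and the same empirical-process control of the centered part, where the paper simply cites the moment inequalities of \cite[Theorems 5.1.5 and 5.1.15]{gine2015mathematical} (which encapsulate exactly the VC-type class from bounded $p$-variation kernels and the Talagrand-type bound you sketch, delivering the $L_r$ moments directly). The argument is correct and matches the paper's proof in substance.
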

\begin{proof}
	By Lemma~\ref{lemma:operator_to_kernel},
	\begin{equation*}
		\begin{aligned}
			\E\left\|\hat T^* - T^*\right\|_{2,\infty}^r & = \E\left[\sup_{z\in(0,1)^p}\left(\int_{[0,1]^q}\left|\hat f_{ZW}(z,w) - f_{ZW}(z,w)\right|^2\dx w\right)^{1/2}\right]^r \\
			& \leq \E\left\|\hat f_{ZW} - f_{ZW}\right\|_\infty^{r}. \\
		\end{aligned}
	\end{equation*}
	By Minkowski inequality,
	\begin{equation*}
		\left(\E\left\|\hat f_{ZW} - f_{ZW}\right\|_\infty^{r}\right)^{1/r} \leq \left(\E\left\|\hat f_{ZW} - \E\hat f_{ZW}\right\|^r_\infty\right)^{1/r} + \left(\left\|\E\hat f_{ZW} - f_{ZW}\right\|_\infty^r\right)^{1/r}.
	\end{equation*}
	The order of the bias follows by standard computations under Assumption~\ref{as:data_npiv} (i) and (iii), see \cite{tsybakov2009introduction}
	\begin{equation*}
		\left\|\E\hat f_{ZW} - f_{ZW}\right\|_\infty = O\left(h_n^{s}\right),
	\end{equation*}
	where the big-O term is independent from the  distribution of $(Z,W)$. For the variance term, we apply the moment inequality in \cite{gine2015mathematical}, see Theorem 5.1.5 and Theorem 5.1.15, which gives
	\begin{equation*}
		\begin{aligned}
			\left(\E\left\|\hat f_{ZW} - \E\hat f_{ZW}\right\|^r_\infty\right)^{1/r} = O\left(\sqrt{\frac{\log h_n^{-1}}{nh_n^{p+q}}} + \sqrt{\frac{1}{nh_n^{p+q}}} + \frac{1}{nh_n^{p+q}}\right).
		\end{aligned}
	\end{equation*}
	Combining all estimates we obtain the result.
\end{proof}

\begin{proof}[Proof of Theorem~\ref{thm:cs_npiv}]
	We shall verify conditions of Theorems~\ref{thm:bootstrap} and \ref{thm:main_ci}. We show first that in the nonparametric IV model Assumption~\ref{as:probability_rates} (C3) is verified with $u_n=h_n^{-q/2}$ and $X_{ni}(w) = U_ih_n^{-q/2}K_w\left(h_n^{-1}(W_i-w)\right)$, where $[K_z\ast\varphi](Z) = h_n^{-p}\int\varphi(z)K_z\left(h_n^{-1}(Z-z)\right)\dx z$. Indeed, it is easy to see that
	\begin{equation*}
		\begin{aligned}
			(\hat r - \hat T\varphi)(w) & = \frac{h_n^{-q/2}}{n}\sum_{i=1}^nX_{ni}(w) + R_{1n}(w) \\
			R_{1n}(w) & = \frac{1}{nh_n^q}\sum_{i=1}^n\left\{\varphi(Z_i) - [K_z\ast\varphi](Z_i)\right\}K_w\left(h_n^{-1}(W_i - w)\right).
		\end{aligned}
	\end{equation*}
	Under Assumption~\ref{as:source_condition1}, $\varphi\in C_M^t[0,1]^p$, so that using standard bias computations, e.g. see \cite{tsybakov2009introduction}, under Assumption~\ref{as:data_npiv} (iii) there exists some constant $C<\infty$ such that
	\begin{equation*}
		\begin{aligned}
			\left\|\varphi - [K_z\ast \varphi]\right\|_\infty \leq  Ch_n^t,
		\end{aligned}
	\end{equation*}
	Then under Assumption~\ref{as:data_npiv} (i) and (ii)
	\begin{equation*}
		\begin{aligned}
			\sup_{P\in\Pc}\E_P\|R_{1n}\|^2 & \leq 
			\sup_{P\in\Pc}\E_P\left\|\frac{1}{n}\sum_{i=1}^n\left|h_n^{-q}K_w\left(h_n^{-1}(W_i - .)\right)\right|\right\|^2 Ch_n^t \\
			& \leq \sup_{P\in\Pc}\E_P\left\|h_n^{-q}K_w\left(h_n^{-1}(W_i - .)\right)\right\|^2Ch_n^t \\
			& = O(h_n^{t-q}).
		\end{aligned}
	\end{equation*}
	Therefore, Assumption~\ref{as:probability_rates} (C3) is verified with $\delta_{3n} = h_n^{(t-q)/2}$.
	
	Consider now the class of functions
	\begin{equation*}
		\Gc_{n} = \left\{(u,w)\mapsto u\frac{1}{h_n^{q/2}}K_w\left(h_n^{-1}(w-v)\right):\; v\in[0,1]^q\right\}.
	\end{equation*}
	Under Assumption~\ref{as:data_npiv} (iii) and (iv) this class admits a square-integrable envelope $G_{n}(u,w) = h_n^{-q/2}F\|K\|_\infty$ with
	\begin{equation*}
		\|G_{n}\|_{P,2} = h_n^{-q/2}\|K\|_\infty F
	\end{equation*}
	for all $P\in\Pc$. Recall that $K_w$ is a product kernel of functions of bounded variation under Assumption~\ref{as:data_npiv} (iii) and that the class of translations of functions of bounded variation is of the VC-type, so Assumption~\ref{as:vc_type} holds.
	
	Next, for any $g_n\in\Gc_n$
	\begin{equation*}
		\Var\left(g_n(v)\right) = \E\left|Uh_n^{-q/2}K_w\left(h_n^{-1}(W-v)\right)\right|^2,
	\end{equation*}
	so that by the law of iterated expectation and change of variables under Assumption~\ref{as:data_npiv} (ii) and (iv)
	\begin{equation*}
		\underline{\sigma}_2\|k\|^{2q}\underline{f} \leq \Var\left(g_n(v)\right) \leq F\|k\|^{2q}\bar f.
	\end{equation*}
	This verifies Assumption~\ref{as:variance_bounds} (i). Next, under Assumption~\ref{as:data_npiv} (iv)
	\begin{equation*}
		\begin{aligned}
			\|G\|_{P,4} & = h_n^{-q/2}\|k\|_\infty^q\|U\|_{P,4} \\
			& \leq Fh_n^{-q/2}\|k\|_\infty^q \\
			P|g_n|^3 & = h_n^{-3q/2}\E\left|UK_w\left(h_n^{-1}(W-v)\right)\right|^3 \\
			& \leq Fh_n^{-q/2}\|k\|^{3q}_3 \\
			P|g_n|^4 & = h_n^{-4q/2}\E\left|UK_w\left(h_n^{-1}(W-v)\right)\right|^4 \\
			& \leq Fh_n^{-q}\|k\|^{4q}_4.
		\end{aligned}
	\end{equation*}
	Therefore Assumption~\ref{as:variance_bounds} (ii) is satisfied with $b_n \leq Ch_n^{-q/2}$ and some universal constant $C<\infty$. Note that $b_n^4\log^7 n/n\to 0$ under Assumption~\ref{as:data_npiv} (vi) (c).
	
	It remains to verify Assumption~\ref{as:probability_rates} (C1) and (C2). The former follows from the Proposition~\ref{prop:rate_of_density} under Assumption~\ref{as:data_npiv} (i)-(iii) with $\delta_{1n} = \sqrt{\frac{\log h_n^{-1}}{nh_n^{p+q}}} + h_n^s$. Lastly, conditionally on the data $\Dc_n$
	\begin{equation*}
		\begin{aligned}
			\E_\varepsilon\left\|\hat V_n^\varepsilon - V_n^\varepsilon\right\|_\infty & = \E_\varepsilon\left\|\frac{1}{\sqrt{n}}\sum_{i=1}^n\varepsilon_i(\hat{\varphi}(Z_i) - \varphi(Z_i))h_n^{-q/2}K_w\left(h_n^{-1}(W_i-.)\right)\right\|_\infty \\
		\end{aligned}
	\end{equation*}
	is the expected value of the supremum of the Gaussian process indexed by the VC-type class. By Dudley-Sudakov's inequality, see \cite{dudley2016vn}, and change of variables
	\begin{equation*}
		\begin{aligned}
			\E_\varepsilon\left\|\hat V_n^\varepsilon - V_n^\varepsilon\right\|_\infty & \leq 24\int_0^{\|H_n\|_{P_n,2}}\sqrt{\log N(\Hc_n,\|.\|_{P_n,2},\epsilon)}\dx\epsilon \\
			& \leq 24\|H_n\|_{P_n,2}\int_0^1\sqrt{v\log\left(\frac{A}{\epsilon}\right)}\dx\epsilon,
		\end{aligned}
	\end{equation*}
	where
	\begin{equation*}
		\begin{aligned}
			\|H_n\|_{P_n,2}^2 & = \frac{1}{n}\sum_{i=1}^n(\hat\varphi(Z_i) - \varphi(Z_i))^2h_n^{-q}\|k\|_\infty^{2q}.
		\end{aligned}
	\end{equation*}
	Therefore
	\begin{equation*}
		\sup_{P\in\Pc}\E_P\left\|\hat V_n^\varepsilon - V_n^\varepsilon\right\|_\infty = \sup_{P\in\Pc}\E_P\left\|\hat\varphi - \varphi\right\|_\infty O\left(h_n^{-q/2}\right).
	\end{equation*}
	Under Assumptions~\ref{as:source_condition1} and \ref{as:data_npiv} (i)-(iv), it follows from \cite{babiiflorens2016b}, Theorem 4.2, that
	\begin{equation*}
		\sup_{P\in\Pc}\E_P\left\|\hat\varphi - \varphi\right\|_\infty = O\left(\frac{1}{\alpha_n}\left(\frac{1}{\sqrt{nh_n^q}} + h_n^{s}\right) + \frac{1}{\alpha_n^{1/2}}\sqrt{\frac{\log h_n^{-1}}{nh_n^{p+q}}} + \alpha_n^{\beta\wedge 1}\right).
	\end{equation*}
	
	Therefore, Assumption~\ref{as:probability_rates} (C2) is verified with
	\begin{equation*}
		\delta_{2n} = O\left(\frac{1}{\alpha_n}\left(\frac{1}{n^{1/2}h_n^q} + h_n^{s-q/2}\right) + \frac{1}{\alpha_n^{1/2}}\frac{\log^{1/2} h_n^{-1}}{n^{1/2}h_n^{p/2+q}} + \alpha_n^{\beta\wedge 1}h_n^{-q/2}\right).
	\end{equation*}
	Combining all estimates, it follows from Theorem~\ref{thm:bootstrap} that if $\frac{\log^7n}{nh_n^{2q}}\to 0$
	\begin{equation*}
		\inf_{P\in\Pc}\Pr(\varphi\in C_{n,1-\gamma}^*) \geq 1 - \gamma - O_p\left(\delta_n\right) - o_p(1)
	\end{equation*}
	with
	\begin{equation*}
		\delta_n = \left(\alpha_n^{-1/2}\delta_{1n} + \alpha_n^{-1}\delta_{3n} + \alpha_n^{\beta\wedge 1}\right)u_n^{-1}\alpha_nn^{1/2}\log n + \delta_{2n}\log n.
	\end{equation*}
	The result then follows under Assumption~\ref{as:data_npiv} (vi) after taking the probability limit.
	
	For confidence sets based on the concentration inequality, we note that Assumptions~\ref{as:probability_rates_ci} (C1) and (C3) hold with the same $\delta_{1n}$ and $\delta_{3n}$ sequences. Note also that $\|X_{ni}\|\leq F\|k\|^q\triangleq G$. It only remains to verify Assumption~\ref{as:probability_rates_ci} (C2). To that end note that
	\begin{equation*}
		\begin{aligned}
			\E_P\left\|\hat X_{ni} - X_{ni}\right\|^2 & = \E_P\left\|\frac{1}{h_n^{q/2}}(\hat\varphi(Z_i) - \varphi(Z_i))K_w(h_n^{-1}(W_i - .))\right\|^2 \\
			& \leq \E_P\|\hat\varphi - \varphi\|^2_\infty\|K_w\|^2.
		\end{aligned}
	\end{equation*}
	Inspection of the proof of the uniform risk bound in \cite{babiiflorens2016b} reveals that for all $r\geq 2$ we also have
	\begin{equation}\label{eq:uniform_risk_bound}
	\left(\sup_{P\in\Pc}\E_P\|\hat\varphi - \varphi\|_\infty^r\right)^{1/r} = O\left(\frac{1}{\alpha_n}\left(\frac{1}{\sqrt{nh_n^q}} + h_n^{s}\right) + \frac{1}{\alpha_n^{1/2}}\sqrt{\frac{\log h_n^{-1}}{nh_n^{p+q}}} + \alpha_n^{\beta\wedge 1}\right),
	\end{equation}
	provided that
	\begin{equation*}
		\sup_{P\in\Pc}\left(\E_P\left\|\hat T^* - T^*\right\|_{2,\infty}^{2r}\right)^{\frac{1}{2r}} = O\left(\sqrt{\frac{\log h_n^{-1}}{nh_n^{p+q}}}\right)
	\end{equation*}
	and
	\begin{equation}\label{eq:moment_r}
	\sup_{P\in\Pc}\left(\|\hat r - \E\hat r\|^{2r}\right)^{\frac{1}{2r}} = O\left(\frac{1}{\sqrt{nh_n^q}} + h_n^s\right).
	\end{equation}
	The former statement is established in Proposition~\ref{prop:rate_of_density}, while for the latter we use the following inequality valid for any i.i.d. sequence $(\xi_{ni})_{i=1}^n$
	\begin{equation*}
		\left(\E\left\|\frac{1}{n}\sum_{i=1}^n\xi_{ni}\right\|^{r}\right)^\frac{1}{r} \leq C\left\{n^{-1/2}\left(\E\|\xi_{ni}\|^2\right)^{1/2} + n^{1/r-1}\left(\E\|\xi_{ni}\|^r \right)^{1/r} \right\}.
	\end{equation*}
	In particular, setting $\xi_{ni} = Y_ih_n^{-q}K_w(h_n^{-1}(W_i - w)) - \E\left[Y_ih_n^{-q}K_w(h_n^{-1}(W_i - w))\right]$, under Assumption~\ref{as:data_npiv} (iv) we obtain Eq.~\ref{eq:moment_r} with $r=2$. Therefore, Assumption~\ref{as:probability_rates_ci} (C2) is verified with
	\begin{equation*}
		\delta_{2n} = \left(\frac{1}{\alpha_n}\left(\frac{1}{\sqrt{nh_n^q}} + h_n^s\right) + \frac{1}{\alpha_n^{1/2}}\sqrt{\frac{\log h_n^{-1}}{nh_n^{p+q}}} + \alpha_n^{\beta \wedge 1}\right)
	\end{equation*}
	and by Theorem~\ref{thm:main_ci}
	\begin{equation*}
		\inf_{P\in\Pc}\Pr\left(\varphi\in C_{n,1-\gamma}\right) \geq 1 - \gamma - O(\delta_n)
	\end{equation*}
	with
	\begin{equation*}
		\delta_n = \delta_{2n}\log n + \left(\frac{\delta_{1n}}{\alpha_n^{1/2}} + \frac{\delta_{3n}}{\alpha_n} + \alpha_n^{\beta\wedge 1}\right)u_n^{-1}\alpha_nn^{1/2}\log n.
	\end{equation*}
\end{proof}

\begin{proof}[Proof of Theorem~\ref{thm:npiv_cs_npiv}]
	For concentration-based confidence sets we apply Theorem~\ref{thm:exp_diam_ci}. In the proof of Theorem~\ref{thm:cs_npiv} we show that $\delta_{1n} = \sqrt{\frac{\log n}{nh_n^{p+q}}} + h_n^{s}$. Under Assumption~\ref{as:data_npiv} (vi) $\delta_{1n}/\alpha_n^{1/2}\to 0$, whence since $u_n = h_n^{-q/2}$, by Theorem~\ref{thm:exp_diam_ci}
	\begin{equation*}
		\sup_{P\in\Pc}\E_P\left|C_{n,1-\gamma}\right|_\infty = O\left(\frac{1}{\alpha_n\sqrt{nh_n^q}}\right).
	\end{equation*}
	Next we verify Assumption~\ref{as:bootstrap_cs_diameters}. Note that (i) holds, since the class is described by the convolution kernel of bounded variation. For (ii) we note that
	\begin{equation*}
		\E_P\left\|\hat X_{ni} - X_{ni}\right\|^2_\infty \leq \E_P\|\hat\varphi - \varphi\|^2_\infty\|K_w\|^2_\infty h_n^{-q},
	\end{equation*}
	which tends to zero due to Eq.~\ref{eq:uniform_risk_bound} and Assumption~\ref{as:data_npiv} (vi). Lastly, (iii) follows since $\E_P\|X_{ni}\|^2_\infty \leq F\|K_w\|_\infty^2h_n^{-q}$. Therefore, we obtain the second statement by Theorem~\ref{thm:exp_diam_bootstrap}.
\end{proof}

\subsection{Functional regressions}
\begin{proof}[Proof of Theorem~\ref{thm:confidence_set_flir}]
	We verify assumptions of Theorems~\ref{thm:bootstrap} and \ref{thm:main_ci}. Under Assumption~\ref{as:data_flir} (ii), by Hoffman-J\o rgensen's inequality, e.g., see \cite{gine2015mathematical}, Theorem 3.1.15,
	\begin{equation*}
		\begin{aligned}
			\left(\E_P\left\|\hat T^* - T^*\right\|_{2,\infty}^2\right)^{1/2} & \leq \left(\E_P\left\|\frac{1}{n}\sum_{i=1}^nZ_iW_i - \E [Z_iW_i]\right\|_{\infty}^2\right)^{1/2} \\
			& = 12\sqrt{3}\left(16\E_P\left\|\frac{1}{n}\sum_{i=1}^nZ_iW_i - \E [Z_iW_i]\right\|_{\infty} + \frac{2FC}{n}\right).\\
		\end{aligned}
	\end{equation*} 
	Next, under Assumption~\ref{as:data_flir} (iii), by the bracketing moment inequality, e.g., see \cite{gine2015mathematical}, Propositions 3.5.15 and 4.3.36,
	\begin{equation*}
		\sup_{P\in\Pc}\E_P\left\|\frac{1}{n}\sum_{i=1}^nZ_iW_i - \E[Z_iW_i]\right\|_\infty = O\left(\frac{1}{\sqrt{n}}\right).
	\end{equation*}
	Therefore, Assumptions~\ref{as:probability_rates} (C1) and \ref{as:probability_rates_ci} (C1) are satisfied with $\delta_{1n}=1/\sqrt{n}$. 
	
	Since
	\begin{equation*}
		\hat r - \hat T\varphi = \frac{1}{n}\sum_{i=1}^nU_iW_i,
	\end{equation*}
	Assumptions~\ref{as:probability_rates} (C3) and \ref{as:probability_rates_ci} (C3) are trivially satisfied with $X_{ni} = U_iW_i$, $u_n = 1$, and $\delta_{3n}=0$.
	
	Note also that by the Cauchy-Schwartz inequality
	\begin{equation*}
		\begin{aligned}
			\E_P\left\|\hat X_{ni} - X_{ni}\right\|^2 & = \E_P\left\|\frac{1}{n}\sum_{i=1}^n\langle\hat\varphi - \varphi,Z_i\rangle W_i\right\|^2 \\
			& \leq \E_P\|\hat\varphi - \varphi\|^2.
		\end{aligned}
	\end{equation*}
	Under Assumptions~\ref{as:data_flir} (i)-(ii), it follows from \cite{babiiflorens2016b} that
	\begin{equation}\label{eq:risk_bound_flir}
	\sup_{P\in\Pc}\E_P\|\hat\varphi - \varphi\|^2 = O\left(\frac{1}{\alpha_nn} + \alpha_n^{(2\beta+1)\wedge 2}\right),
	\end{equation}
	whence Assumption~\ref{as:probability_rates_ci} (C2) is verified with $\delta_{2n} = \frac{1}{\sqrt{\alpha_nn}} + \alpha_n^{(\beta+1/2)\wedge 1}$. Lastly, $\|X_{ni}\|\leq 1 \triangleq G$. Then by Theorem~\ref{thm:main_ci} for confidence sets based on the concentration inequality, we obtain
	\begin{equation*}
		\inf_{P\in\Pc}\mathrm{Pr}\left(\varphi\in C_{n,1-\gamma}\right) \geq 1 - \gamma - O\left(\left(\alpha_n^{\beta\wedge 1+1}n^{1/2} + \frac{1}{\sqrt{\alpha_nn}} + \alpha_n^{\beta/2\wedge 1}\right)\log n\right).
	\end{equation*}
	For bootstrap confidence sets we need additionally to verify Assumptions~\ref{as:vc_type}, \ref{as:variance_bounds}, and \ref{as:probability_rates} (C2). The relevant class of functions is $\Gc = \left\{g_s(u,w) = uw(s):\; s\in[0,1]^p \right\}$. 
	This class is of VC-type under Assumption~\ref{as:data_flir} (iii), e.g., see \cite{gine2015mathematical}, Proposition 3.6.12. Therefore Assumption~\ref{as:vc_type} is satisfied. Next under Assumption~\ref{as:data_flir} (iv)
	\begin{equation*}
		\Var(g_s) = \E|UW(s)|^2
	\end{equation*}
	is uniformly bounded away from zero and infinity, which verifies Assumption~\ref{as:variance_bounds} (i). Next, under Assumption~\ref{as:data_flir} (ii)
	\begin{equation*}
		\begin{aligned}
			\|G\|_{P,4} & = F \\
			P|g_s|^3 & = \E\left|UW(s)\right|^3 \leq F^3 \\
			P|g_s|^4 & = \E\left|UW(s)\right|^4 \leq F^4.
		\end{aligned}
	\end{equation*}
	Therefore Assumption~\ref{as:variance_bounds} (ii) is satisfied with universal constants $b,\sigma$. Therefore, $b^4\log^7n/n\to 0$.
	
	Lastly, note that for some universal constant $C$ depending only on $\|r\|_\infty$ and $K$ 
	\begin{equation*}
		\begin{aligned}
			\E_P\left\|\hat V_n^\varepsilon - V_n^\varepsilon \right\|_\infty & = \E_P\E_\varepsilon\left\|\frac{1}{\sqrt{n}}\sum_{i=1}^n\varepsilon_i\langle \hat\varphi - \varphi,Z_i\rangle W_i \right\|_\infty \\
			& \leq CF\E_P\|\hat\varphi - \varphi\|,
		\end{aligned}
	\end{equation*}
	where the inequality follows by Dudley-Sudakov's bound. Therefore, Assumption~\ref{as:probability_rates} (C2) is verified with the same $\delta_{2n} = \frac{1}{\sqrt{\alpha_nn}} + \alpha_n^{\beta/2\wedge 1}$. Then, by Theorem~\ref{thm:bootstrap}
	\begin{equation*}
		\inf_{P\in\Pc}\Pr(\varphi\in C_{n,1-\gamma}^*) \geq 1 - \gamma - O_p\left(\left(\alpha_n^{\beta\wedge 1+1}n^{1/2} + \frac{1}{\sqrt{\alpha_nn}} + \alpha_n^{\beta/2\wedge 1}\right)\log n\right) - o_p(1)
	\end{equation*}
	and the result follows by taking the probability limit.
\end{proof}
\begin{proof}[Proof of Theorem~\ref{thm:flir_cs_rate}]
	For concentration-based confidence sets we apply Theorem~\ref{thm:exp_diam_ci}. In the proof of Theorem~\ref{thm:confidence_set_flir} we show that $\delta_{1n}=n^{-1/2}$. Under Assumption~\ref{as:data_flir} (vi), $\delta_{1n}/\alpha_n^{1/2}\to 0$ and $\delta_{2n}\to 0$, whence since $u_n = 1$, by Theorem~\ref{thm:exp_diam_ci}
	\begin{equation*}
		\sup_{P\in\Pc}\E_P\left|C_{n,1-\gamma}\right|_\infty = O\left(\frac{1}{\alpha_nn^{1/2}}\right).
	\end{equation*}
	Next we verify Assumption~\ref{as:bootstrap_cs_diameters}. (i) holds, under Assumption~\ref{as:data_flir} (iii). For (ii), under Assumption~\ref{as:data_flir} (ii)
	\begin{equation*}
		\E_P\left\|\hat X_{ni} - X_{ni}\right\|^2_\infty \leq CF\E_P\|\hat\varphi - \varphi\|^2,
	\end{equation*}
	which tends to zero due to Eq.~\ref{eq:risk_bound_flir} and Assumption~\ref{as:data_flir} (vi). Lastly, (iii) follows trivially under Assumption~\ref{as:data_flir} (ii). Therefore, we obtain the second statement by Theorem~\ref{thm:exp_diam_bootstrap}.
\end{proof}

\subsection{Density deconvolution}
\begin{proof}[Proof of Theorem~\ref{thm:confidence_set_deconv}]
	We verify assumptions of Theorems~\ref{thm:bootstrap} and \ref{thm:main_ci}. Note that Assumptions~\ref{as:probability_rates} (C1) and \ref{as:probability_rates_ci} (C1) are trivially satisfied with $\delta_{1n}=0$. Next,
	\begin{equation*}
		\hat r - T\varphi = \frac{1}{nh_n}\sum_{i=1}^nK\left(\frac{Y_i - y}{h_n}\right) - \frac{1}{h_n}\E\left[K\left(\frac{Y_i - y}{h_n}\right)\right] + R_{1n}
	\end{equation*}
	with
	\begin{equation*}
		\begin{aligned}
			\left\|R_{1n}\right\|^2 & = \int_0^1\left|\frac{1}{h_n}\E\left[K\left(\frac{Y_i - y}{h_n}\right)\right] - r(y)\right|^2\dx y.
		\end{aligned}
	\end{equation*}
	Under Assumption~\ref{as:data_decon} (i)-(iv) by \cite{tsybakov2009introduction}, Proposition 1.5, we have
	\begin{equation*}
		\sup_{P\in\Pc}\E_P\|R_{1n}\|^2 = O(h_n^{2s}).
	\end{equation*}
	Therefore, Assumptions~\ref{as:probability_rates} (C3) and \ref{as:probability_rates_ci} (C3) are verified with $X_{ni} = \frac{1}{h_n^{1/2}}K\left(\frac{Y_i - y}{h_n}\right) - \frac{1}{h_n^{1/2}}\E\left[K\left(\frac{Y_i - y}{h_n}\right)\right]$, $u_n = h_n^{-1/2}$, and $\delta_{3n}=h_n^s$.
	
	Note also that
	\begin{equation*}
		\begin{aligned}
			\E_P\left\|\hat X_{ni} - X_{ni}\right\|^2 & = \E_P\left\|\frac{1}{nh_n^{1/2}}\sum_{i=1}^nK\left(\frac{Y_i - .}{h_n}\right) - \frac{1}{h_n^{1/2}}\E\left[K\left(\frac{Y_1 - .}{h_n}\right)\right] \right\|^2 \\
			& \leq \frac{\|K\|^2}{n}.
		\end{aligned}
	\end{equation*}
	Therefore, Assumption~\ref{as:probability_rates_ci} (C2) is verified with $\delta_{2n} = n^{-1/2}$. Lastly, putting $[K_h\ast r](y) = h_n^{-1}\int K(h_n^{-1}(y-u))r(u)\dx u$
	\begin{equation*}
		\begin{aligned}
			\|X_{ni}\|^2 & = \left\|h_n^{-1/2}K\left(\frac{Y_i-.}{h_n}\right) - h_n^{1/2}[K_h\ast r]\right\|^2 \\
			& \leq \frac{2}{h_n}\int K^2\left(\frac{Y_i - y}{h_n}\right)\dx y + 2h_n\|K_h\ast r\|^2 \\
			& \leq 4\|K\|^2 \triangleq G^2,
		\end{aligned}
	\end{equation*}
	where the last inequality follows by Young's inequality for convolutions. Then by Theorem~\ref{thm:main_ci} for confidence sets based on the concentration inequality, we obtain
	\begin{equation*}
		\inf_{P\in\Pc}\mathrm{Pr}\left(\varphi\in C_{n,1-\gamma}\right) \geq 1 - \gamma - O\left(\left(h_n^{s} + \alpha_n^{\beta\wedge 1+1}\right)\sqrt{nh_n}\log n + \frac{\log n}{n^{1/2}}\right).
	\end{equation*}
	For bootstrap confidence sets we also need to verify Assumptions~\ref{as:vc_type}, \ref{as:variance_bounds}, and \ref{as:probability_rates} (C2). The relevant class of functions is
	\begin{equation*}
		\Gc_n = \left\{\tilde y \mapsto \frac{1}{h_n^{1/2}}K\left(\frac{y - \tilde y}{h_n}\right):\; y\in[0,1] \right\}.
	\end{equation*}
	This class admits the envelope $G(y) = h_n^{-1/2}\|K\|_\infty$ and is of VC-type under Assumption~\ref{as:data_decon} (vi), e.g., see \cite{gine2015mathematical}, Proposition 3.6.12. Therefore Assumption~\ref{as:vc_type} is satisfied. For $g_n\in\Gc_n$
	\begin{equation*}
		\Var(g_n) = \E\left|\frac{1}{h_n^{1/2}}K\left(\frac{Y_1 - y}{h_n}\right)\right|^2 - \left(\E\left[\frac{1}{h_n^{1/2}}K\left(\frac{Y_1 - y}{h_n}\right)\right]\right)^2
	\end{equation*}
	so that for sufficiently large $n$
	\begin{equation*}
		0<\inf_{y\in[0,1]} r(y)\|K\|^2 - h_n\|K\|_1^2\|r\|_\infty^2 \leq \Var\left(g_n\right) \leq \|K\|^2\|r\|_\infty.
	\end{equation*}
	This verifies Assumption~\ref{as:variance_bounds} (i). Next, under Assumption~\ref{as:data_decon} (iv)
	\begin{equation*}
		\begin{aligned}
			\|G\|_{P,4} & = h_n^{-1/2}\|K\|_\infty \\
			P|g_n|^3 & = h_n^{-3/2}\E\left|K\left(h_n^{-1}(Y-y)\right)\right|^3 \\
			& \leq h_n^{-1/2}\|K\|^{3}_3\|r\|_\infty \\
			P|g_n|^4 & = h_n^{-4/2}\E\left|K\left(h_n^{-1}(Y-y)\right)\right|^4 \\
			& \leq h_n^{-1}\|K\|^{4}_4\|r\|_\infty.
		\end{aligned}
	\end{equation*}
	Therefore, Assumption~\ref{as:variance_bounds} (ii) is satisfied with $b_n = O(h_n^{-1/2})$ and some universal constant $\sigma$. Note also that $b_n^4\log^7n/n\to 0$ under Assumption~\ref{as:data_decon}
	
	Lastly, note that for some universal constant $C$ depending only on $\|r\|_\infty$ and $K$ 
	\begin{equation*}
		\begin{aligned}
			\E_P\left\|\hat V_n^\varepsilon - V_n^\varepsilon \right\|_\infty & = \E\left|\frac{1}{\sqrt{n}}\sum_{i=1}^n\varepsilon_i\right|\E_P\left\|\frac{1}{nh_n^{1/2}}\sum_{i=1}^nK\left(\frac{Y_i - .}{h_n}\right) - \frac{1}{h_n^{1/2}}\E\left[K\left(\frac{Y_1 - .}{h_n}\right)\right] \right\|_\infty \\
			& \leq C\sqrt{\frac{\log h_n^{-1}}{nh_n}},
		\end{aligned}
	\end{equation*}
	where the last line follows by \cite{gine2002rates}, Theorem 2.1, under Assumption~\ref{as:data_decon} (i), (ii), (iv), and (vi). This verifies Assumption~\ref{as:probability_rates} (C2) with $\delta_{2n} = \sqrt{\frac{\log h_n^{-1}}{nh_n}}$. Therefore, by Theorem~\ref{thm:bootstrap}
	\begin{equation*}
		\inf_{P\in\Pc}\Pr(\varphi\in C_{n,1-\gamma}^*) \geq 1 - \gamma - O_p\left(\left(h_n^s + \alpha_n^{\beta\wedge 1+1}\right)\sqrt{nh_n}\log n + \sqrt{\frac{\log h_n^{-1}}{nh_n}}\log n\right) - o_p(1)
	\end{equation*}
	and the result follows by taking the probability limit.
\end{proof}

\begin{proof}[Proof of Theorem~\ref{thm:deconv_cs_rate}]
	For concentration-based confidence sets we apply Theorem~\ref{thm:exp_diam_ci}. In the proof of Theorem~\ref{thm:confidence_set_flir} we show that $\delta_{1n}=0$. Therefore, by Theorem~\ref{thm:exp_diam_ci} with $u_n=h_n^{-1/2}$
	\begin{equation*}
		\sup_{P\in\Pc}\E_P\left|C_{n,1-\gamma}\right|_\infty = O\left(\frac{1}{\alpha_n\sqrt{nh_n^q}}\right)
	\end{equation*}
	Next we verify Assumption~\ref{as:bootstrap_cs_diameters}. Note that (i) holds, since the class is described by the translation of the kernel function of bounded variation. For (ii) we note that by Hoffman-J\o rgensen's inequality
	\begin{equation*}
		\begin{aligned}
			& \E_P\left\|\hat X_{ni} - X_{ni}\right\|^2_\infty \\
			& \leq \E_P\left\|\frac{1}{nh_n^{1/2}}\sum_{i=1}^nK\left(\frac{Y_i-y}{h_n}\right) - \frac{1}{h_n^{1/2}}\E\left[K\left(\frac{Y_i-y}{h_n}\right)\right]\right\|^2_\infty \\
			& \leq 432\left(\E_P\left\|\frac{1}{nh_n^{1/2}}\sum_{i=1}^nK\left(\frac{Y_i-y}{h_n}\right) - \frac{1}{h_n^{1/2}}\E\left[K\left(\frac{Y_i-y}{h_n}\right)\right]\right\|_\infty + \frac{2}{nh_n^{1/2}}\right)^2.
		\end{aligned}
	\end{equation*}
	Therefore,
	\begin{equation*}
		\sup_{P\in\Pc}\E_P\left\|\hat X_{ni} - X_{ni}\right\|^2_\infty = O\left(\frac{\log h_n^{-1}}{n} + h_n^{2s} + \frac{1}{n^2h_n}\right)
	\end{equation*}
	which tends to zero under Assumption~\ref{as:data_decon} (vi). Lastly, (iii) follows since $\E_P\|X_{ni}\|^2_\infty \leq 2\|K\|_\infty^2h_n^{-1}$. Therefore, we obtain the second statement by Theorem~\ref{thm:exp_diam_bootstrap}.
\end{proof}

\section{Technical lemmas}\label{app:online}
We first characterize the bias of the regularized estimator uniformly over our class of models.
\begin{proof}[Proof of Proposition~\ref{prop:regularization_bias}]
	By Lemma~\ref{lemma:infinity_to_hilbert}, $(\alpha_n I + T^*T)$ is an invertible operator between $(C,\|.\|_\infty)$ spaces. Using $f(T^*T)T^*=T^*f(TT^*)$ with $f(x)=(\alpha_n+x)^{-1}$, and factorizing the operator norm $\|T^*g(TT^*)\phi\|_\infty \leq \|T^*\|_{2,\infty}\|g(TT^*)\|\|\phi\|$ with $g(x)=\alpha_n(\alpha_n + x)^{-1}x^\beta$, under Assumption~\ref{as:source_condition1} for any $(\varphi,T)\in\Fc$ 
	\begin{equation}\label{eq:bias1}
	\begin{aligned}
	\left\|(\alpha_n I + T^*T)^{-1}T^*r - \varphi\right\|_\infty & = \left\|\left[(\alpha_n I + T^*T)^{-1}T^*T - I\right]\varphi\right\|_\infty \\
	& = \left\|\alpha_n (\alpha_n I + T^*T)^{-1}\varphi\right\|_\infty \\
	& = \left\|\alpha_n(\alpha_n I + T^*T)^{-1}(T^*T)^\beta T^*\psi\right\|_\infty \\
	& = \left\|T^*\alpha_n(\alpha_n I + TT^*)^{-1}(TT^*)^\beta\psi\right\|_\infty \\ 
	& \leq \|T^*\|_{2,\infty}\left\|\alpha_n(\alpha_n I + TT^*)^{-1}(TT^*)^\beta\right\|\|\psi\| \\
	\end{aligned}
	\end{equation}
	Note that for $b\in(0,1)$, the function $\lambda\mapsto \frac{\lambda^b}{\alpha_n + \lambda}$ is strictly concave on $(0,\infty)$ admitting its maximum at $\lambda = \frac{b}{1-b}\alpha_n$. On the other hand, for $b\in[1,\infty)$, this function is strictly increasing on $[0,\|T\|^2]$, reaching its maximum at the end of this interval. Therefore, by isometry of functional calculus
	\begin{equation*}
		\left\|\alpha_n(\alpha_n I + TT^*)^{-1}(TT^*)^\beta\right\| = \alpha_n\sup_{\lambda\in[0,\|T\|^2]}\left|\frac{\lambda^\beta}{\alpha_n + \lambda}\right| \leq \tilde R\alpha_n^{\beta\wedge 1}
	\end{equation*}
	with $\tilde R = \beta^\beta(1-\beta)^{1-\beta}\one_{0<\beta<1} + C^{2(\beta-1)}\one_{\beta\geq1}$. Therefore,
	\begin{equation*}
		\sup_{(\varphi,T)\in\Fc}\left\|(\alpha_n I + T^*T)^{-1}T^*r - \varphi\right\|_\infty \leq R\alpha_n^{\beta\wedge 1}
	\end{equation*}
	with $R = C^2\tilde R$.
\end{proof}

We also need the following inequality known in the theory of numerical ill-posed inverse problems.
\begin{lemma}\label{lemma:infinity_to_hilbert}
	Suppose that $T:L_2[a,b]^p\to L_2[a,b]^q$ is an integral operator with a continuous kernel function. Then for any $\alpha>0$ $(\alpha I + T^*T)$ is invertible as an operator from $\Rc(T^*T)\subset (C,\|.\|_\infty)$ to $(C,\|.\|_\infty)$ space and
	\begin{equation*}
		\left\|(\alpha I + T^*T)^{-1}\right\|_{\infty} \leq \frac{\|T^*\|_{2,\infty}/2 + \alpha^{1/2}}{\alpha^{3/2}}.
	\end{equation*}
\end{lemma}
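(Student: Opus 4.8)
The plan is a two‑step argument: first establish that $\alpha I + T^*T$ is a bijection of $(C,\|\cdot\|_\infty)$ onto itself, so that $(\alpha I + T^*T)^{-1}$ is genuinely a well‑defined bounded operator on continuous functions (in particular on $\Rc(T^*T)$); then bound its sup‑norm by passing through the $L_2$ resolvent together with one elementary functional‑calculus estimate.

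For the first step: continuity of the kernel on the compact domain forces square‑integrability, so $T$ is Hilbert–Schmidt (hence compact on $L_2$), and moreover $T,T^*\in\Lc_{2,\infty}$, i.e. both map $L_2$ boundedly into $(C,\|\cdot\|_\infty)$ — this follows from dominated convergence applied to $w\mapsto\int t(w,z)\varphi(z)\dx z$ together with the Cauchy–Schwarz bound $|(T^*\psi)(z)|\le\|t(\cdot,z)\|\,\|\psi\|$. Consequently $T^*T$ maps $L_2$ into $C$, and since $T^*T$ is self‑adjoint and positive, $\alpha I+T^*T$ is invertible on $L_2$ with $\|(\alpha I+T^*T)^{-1}\|_{L_2\to L_2}\le\alpha^{-1}$. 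Given $g\in C\subset L_2$, set $\varphi=(\alpha I+T^*T)^{-1}g\in L_2$; then $\alpha\varphi=g-T^*T\varphi$ with $g\in C$ and $T^*T\varphi\in C$ (because $\varphi\in L_2$), so $\varphi\in C$. Injectivity on $C$ is inherited from $L_2$, hence $\alpha I+T^*T\colon C\to C$ is a bijection whose inverse is the restriction of the $L_2$ inverse, which gives the first assertion.

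For the norm bound I would use the resolvent identity
\[
(\alpha I+T^*T)^{-1}=\tfrac1\alpha\bigl[I-T^*(\alpha I+TT^*)^{-1}T\bigr],
\]
checked by right‑multiplying by $\alpha I+T^*T$ and using $T(\alpha I+T^*T)=(\alpha I+TT^*)T$. Applying this to $g\in C$ and bounding $\|T^*h\|_\infty\le\|T^*\|_{2,\infty}\|h\|$ for $h\in L_2$,
\[
\bigl\|(\alpha I+T^*T)^{-1}g\bigr\|_\infty\le\tfrac1\alpha\|g\|_\infty+\tfrac1\alpha\|T^*\|_{2,\infty}\bigl\|(\alpha I+TT^*)^{-1}Tg\bigr\|.
\]
By the functional calculus for the positive self‑adjoint operator $TT^*$,
\[
\bigl\|(\alpha I+TT^*)^{-1}T\bigr\|_{L_2\to L_2}^2=\bigl\|(\alpha I+TT^*)^{-1}TT^*(\alpha I+TT^*)^{-1}\bigr\|=\sup_{\lambda\ge0}\frac{\lambda}{(\alpha+\lambda)^2}=\frac1{4\alpha},
\]
the supremum being attained at $\lambda=\alpha$; combined with $\|g\|\le\|g\|_\infty$ (the domain having Lebesgue measure at most one, which we may assume after rescaling) this gives $\|(\alpha I+T^*T)^{-1}g\|_\infty\le\bigl(\tfrac1\alpha+\tfrac{\|T^*\|_{2,\infty}}{2\alpha^{3/2}}\bigr)\|g\|_\infty=\tfrac{\|T^*\|_{2,\infty}/2+\alpha^{1/2}}{\alpha^{3/2}}\|g\|_\infty$, which is exactly the claimed inequality.

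The kernel‑continuity facts and the functional calculus are routine; the two places to be careful are (i) the surjectivity onto $C$ in the first step, which hinges on the observation that $T^*T\varphi$ is already continuous for merely $\varphi\in L_2$, and (ii) selecting the resolvent identity above so that the sharp $L_2$ bound $\tfrac1{2\sqrt\alpha}$ on $(\alpha I+TT^*)^{-1}T$ appears — this is what produces the constant $1/2$ and the $\alpha^{-3/2}$ order, rather than a crude $\alpha^{-1}\|T\|$ — while keeping strict track of which estimates live in $\|\cdot\|_\infty$ and which in the $L_2$ norm.
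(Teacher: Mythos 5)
Your proof is correct and follows essentially the same route as the paper: the identity $(\alpha I+T^*T)^{-1}=\alpha^{-1}\bigl[I-T^*(\alpha I+TT^*)^{-1}T\bigr]$ is exactly the paper's rearrangement $\bigl[(\alpha I+T^*T)^{-1}T^*T-I\bigr]\varphi=-\alpha(\alpha I+T^*T)^{-1}\varphi$ combined with $f(T^*T)T^*=T^*f(TT^*)$, and the factorization through $\|T^*\|_{2,\infty}$ plus the functional-calculus bound $\sup_\lambda \lambda^{1/2}/(\alpha+\lambda)=1/(2\alpha^{1/2})$ matches the paper's estimate. Your bootstrap argument $\alpha\varphi=g-T^*T\varphi\in C$ for invertibility on $(C,\|\cdot\|_\infty)$ is simply a more explicit version of the paper's appeal to $L_2$-invertibility plus the smoothing property of $T^*T$.
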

\begin{proof}
	Continuity of the kernel function ensures that $T^*T$ maps to $C[a,b]^p$. Since for any $\alpha>0$,  $\alpha I + T^*T$ is invertible as an operator on the $L_2$ space, see \cite{nair2009linear}, Lemma 4.1, and $(C,\|.\|_\infty)\subset L_2$, it is also invertible as an operator between $(C,\|.\|_\infty)$.
	
	Note that for any $\varphi\in C[a,b]^p$
	\begin{equation*}
		\left[(\alpha_nI + T^*T)^{-1}T^*T - I\right]\varphi = -\alpha_n(\alpha_n I + T^*T)^{-1}\varphi.
	\end{equation*}
	Using this identity
	\begin{equation}\label{eq:rajan}
	\left\|(\alpha_n I + T^*T)^{-1}\right\|_{\infty} = \sup_{\|\varphi\|_\infty=1}\left\|(\alpha_n I + T^*T)^{-1}\varphi\right\|_\infty \leq \frac{\|(\alpha_n I + T^*T)^{-1}T^*T\|_\infty + 1}{\alpha_n}.
	\end{equation}
	Factoring the norm as $\|T^*\psi\|_\infty\leq \|T^*\|_{2,\infty}\|\psi\|$, and using the isometry of functional calculus
	\begin{equation*}
		\begin{aligned}
			\left\|(\alpha_n I + T^*T)^{-1}T^*T\right\|_\infty & = \left\|T^*(\alpha_n I + TT^*)^{-1}T\right\|_\infty \\
			& \leq \|T^*\|_{2,\infty}\sup_{\lambda\in[0,\|T\|^2]}\left|\frac{\lambda^{1/2}}{\alpha_n + \lambda}\right| \\
			& = \frac{\|T^*\|_{2,\infty}}{2\alpha_n^{1/2}}.
		\end{aligned}
	\end{equation*}
	Combining this with Eq.~\ref{eq:rajan} gives the result.
\end{proof}

For the next result, put
\begin{equation*}
	\xi_n = \left[(\alpha_n I + \hat T^*\hat T)^{-1}\hat T^*\hat T - (\alpha_n I + T^*T)^{-1}T^*T\right]\varphi.
\end{equation*}
\begin{lemma}\label{lemma:xi_term}
	Suppose that $T$ and $\hat T$ are integral operators in $\Lc_2$ with continuous kernel functions mapping from $L_2[a,b]^p$ to a subset of $L_2[a,b]^q$. Then under Assumption~\ref{as:source_condition1}, there exists a constant $C<\infty$ that does not depend on $(\varphi,T)$ such that
	\begin{equation*}
		\|\xi_n\|_\infty \leq \frac{C}{\alpha_n^{1/2}}\left(\|\hat T^* - T^*\|_{2,\infty} + \|\hat T^* - T^*\|_{2,\infty}^2\right).
	\end{equation*}
\end{lemma}
\begin{proof}
	Decompose
	\begin{equation*}
		\begin{aligned}
			\xi_n & = \left[(\alpha_n I + \hat T^*\hat T)^{-1}\hat T^*\hat T - (\alpha_n I + T^*T)^{-1}T^*T\right]\varphi  \\
			& = \left[\alpha_n(\alpha_n I + T^*T)^{-1} - \alpha_n(\alpha_n I + \hat T^*\hat T)^{-1}\right]\varphi \\
			& = (\alpha_nI + \hat T^*\hat T)^{-1}\left[\hat T^*\hat T - T^*T\right]\alpha_n(\alpha_nI + T^*T)^{-1}\varphi \\
			& \leq (\alpha_n I + \hat T^*\hat T)^{-1}\hat T^*(\hat T - T)\alpha_n(\alpha_n I + T^*T)^{-1}\varphi \\
			& + (\alpha_n I + \hat T^*\hat T)^{-1}(\hat T^* - T^*)\alpha_nT(\alpha_n I + T^*T)^{-1}\varphi \\
			& \equiv I_n + II_n.
		\end{aligned}
	\end{equation*}
	Both terms involve the error from the estimation of the operators and regularization bias. For the first term, we factor the operator norm $\|\hat T^*\psi\|_\infty \leq \|\hat T^*\|_{2,\infty}\|\psi\|$
	\begin{equation*}
		\begin{aligned}
			\|I_n\|_\infty & \leq \|\hat T^*\|_{2,\infty}\left\|(\alpha_n I + \hat T\hat T^*)^{-1}\right\|\|\hat T - T\|\left\|\alpha_n(\alpha_n I + T^*T)^{-1}\varphi\right\|
		\end{aligned}
	\end{equation*}
	By isometry of functional calculus
	\begin{equation*}
		\left\|(\alpha_n I + \hat T^*\hat T)^{-1}\right\| =  \sup_{\lambda\in[0,\|\hat T\|^2]}\left|\frac{1}{\alpha_n + \lambda}\right| \leq \frac{1}{\alpha_n},\qquad \mathrm{a.s.}
	\end{equation*}
	Under Assumption~\ref{as:source_condition1}, the last term is
	\begin{equation*}
		\left\|\alpha_n(\alpha_n I + T^*T)^{-1}(T^*T)^\beta T^*\psi\right\| \leq \alpha_n\sup_{\lambda\in[0,\|T\|^2]}\left|\frac{\lambda^{\beta+1/2}}{\alpha_n + \lambda}\right|C = O\left(\alpha_n^{(\beta+1/2)\wedge 1}\right).
	\end{equation*}
	Combining these findings with the following bound
	\begin{equation*}
		\|\hat T - T\|=\|\hat T^* - T^*\|\leq (b-a)^{p/2}\|\hat T^* - T^*\|_{2,\infty},
	\end{equation*}
	we have for all $(\varphi,T)\in\Fc$
	\begin{equation*}
		\|I_n\|_\infty = O\left(\frac{\alpha_n^{\beta\wedge 1/2}}{\alpha_n^{1/2}}\|\hat T^*-T^*\|_{2,\infty}\|\hat T^*\|_{2,\infty}\right).
	\end{equation*}
	
	For the second term, we factor the operator norm in the following way:
	\begin{equation*}
		\|II_n\|_\infty \leq \left\|(\alpha_n I + \hat T^*\hat T)^{-1}\right\|_\infty\left\|\hat T^* - T^*\right\|_{2,\infty}\left\|\alpha_nT(\alpha_n I + T^*T)^{-1}\varphi\right\|
	\end{equation*}
	By Lemma~\ref{lemma:infinity_to_hilbert}
	\begin{equation*}
		\left\|(\alpha_n I + \hat T^*\hat T)^{-1}\right\|_\infty \leq \frac{\|\hat T^*\|_{2,\infty}/2 + \alpha_n^{1/2}}{\alpha_n^{3/2}}
	\end{equation*}
	Under Assumption~\ref{as:source_condition1},
	\begin{equation*}
		\left\|\alpha_nT(\alpha_n I + T^*T)^{-1}(T^*T)^\beta T^*\psi\right\| \leq\alpha_n\sup_{\lambda\in[0,\|T\|^2]}\left|\frac{\lambda^{\beta+1}}{\alpha_n + \lambda}\right| C = O(\alpha_n).
	\end{equation*}
	Combining all above findings, we have uniformly in $(\varphi, T)$
	\begin{equation*}
		\|II_n\|_\infty = O\left(\|\hat T^* - T^*\|_{2,\infty}\left(\frac{\|\hat T^*\|_{2,\infty}}{\alpha_n^{1/2}} + 1\right)\right),
	\end{equation*}
	and the conclusion follows from collecting all estimates and by triangle inequality.
\end{proof}

For the following lemma denote
\begin{equation*}
	\begin{aligned}
		\nu_n & = \frac{u_n}{n}\sum_{i=1}^n(\alpha_n I + T^*T)^{-1}T^*X_{ni}, \\
		\hat\nu_n & = \frac{u_n}{n}\sum_{i=1}^n(\alpha_n I + \hat T^*\hat T)^{-1}\hat T^*X_{ni}, \\
		\nu_n^\eta & = \frac{u_n}{n}\sum_{i=1}^n(\alpha_n I + T^*T)^{-1}T^*\eta_iX_{ni}, \\
		\hat\nu_n^\eta & = \frac{u_n}{n}\sum_{i=1}^n(\alpha_n I + \hat T^*\hat T)^{-1}\hat T^*\eta_i\hat X_{ni}. \\
	\end{aligned}
\end{equation*}

\begin{lemma}\label{lemma:nu}
	For any $T,\hat T\in\Lc_2$ we have
	\begin{equation*}
		\begin{aligned}
			\left\|\hat\nu_{n} - \nu_{n}\right\|_\infty & \leq \left(\frac{1}{\alpha_n} + \frac{1}{2\alpha_n^{3/2}}\|T^*\|_{2,\infty} \right)\|\hat T^* - T^*\|_{2,\infty}\left\|\frac{u_n}{n}\sum_{i=1}^nX_{ni}\right\|, \\
			\left\|\hat\nu_{n}^\eta - \nu_{n}^\eta\right\|_\infty & \leq \left(\frac{1}{\alpha_n} + \frac{1}{2\alpha_n^{3/2}}\|T^*\|_{2,\infty} \right)\|\hat T^* - T^*\|_{2,\infty}\left\|\frac{u_n}{n}\sum_{i=1}^n\eta_i X_{ni}\right\| \\
			& \qquad + \frac{\|\hat T^*\|_{2,\infty}}{\alpha_n}\left\|\frac{u_n}{n}\sum_{i=1}^n\eta_i(\hat X_{ni} - X_{ni})\right\|. \\
		\end{aligned}
	\end{equation*}
\end{lemma}
\begin{proof}
	Decompose
	\begin{equation*}
		\begin{aligned}
			& \hat\nu_n - \nu_n = \\
			& = \left[\hat T^*(\alpha_n I + \hat T\hat T^*)^{-1} - T^*(\alpha_n I + TT^*)^{-1}\right]\frac{u_n}{n}\sum_{i=1}^nX_{ni} \\
			& = \left[(\hat T^* - T^*)(\alpha_n I + \hat T\hat T^*)^{-1} - T^*(\alpha_n I + \hat T\hat T^*)^{-1}(\hat T\hat T^* - TT^*)(\alpha_n I + TT^*)^{-1}\right]\frac{u_n}{n}\sum_{i=1}^nX_{ni} \\
			& = \left[(\hat T^* - T^*)(\alpha_n I + \hat T\hat T^*)^{-1} - T^*(\alpha_n I + \hat T\hat T^*)^{-1}\hat T(\hat T^* - T^*)(\alpha_n I + TT^*)^{-1}\right]\frac{u_n}{n}\sum_{i=1}^nX_{ni} \\
			& \quad - T^*(\alpha_n I + \hat T\hat T^*)^{-1}(\hat T - T)T^*(\alpha_n I + TT^*)^{-1}\frac{u_n}{n}\sum_{i=1}^nX_{ni}.
		\end{aligned}
	\end{equation*}
	Note that
	\begin{equation*}
		\|\hat T - T\| = \|\hat T^* - T^*\| \leq \|\hat T^* - T^*\|_{2,\infty}
	\end{equation*}
	and
	\begin{equation*}
		\|(\alpha_n I + \hat T\hat T^*)^{-1}\hat T\|\leq \frac{1}{2\alpha_n^{1/2}},\qquad \|T^*(\alpha_n I + TT^*)^{-1}\| \leq \frac{1}{2\alpha_n^{1/2}}.
	\end{equation*}
	Then
	\begin{equation*}
		\begin{aligned}
			\|\hat\nu_n - \nu_n\|_\infty & \leq \left(\frac{1}{\alpha_n} + \frac{1}{2\alpha_n^{3/2}}\|T^*\|_{2,\infty} \right)\|\hat T^* - T^*\|_{2,\infty}\left\|\frac{u_n}{n}\sum_{i=1}^nX_{ni}\right\|. \\
		\end{aligned}
	\end{equation*}
	For the second statement note that
	\begin{equation*}
		\begin{aligned}
			\hat\nu_n^\eta - \nu_n^\eta & = \left[\hat T^*(\alpha_n I + \hat T\hat T^*)^{-1} - T^*(\alpha_n I + TT^*)^{-1}\right]\frac{u_n}{n}\sum_{i=1}^n\eta_iX_{ni} \\
			& \qquad + (\alpha_nI + \hat T^*\hat T)^{-1}\hat T^*\frac{u_n}{n}\sum_{i=1}^n\eta_i(\hat X_{ni} - X_{ni}) \\
			& \triangleq I_n + II_n.
		\end{aligned}
	\end{equation*}
	Note that the bound on $I_n$ follows trivially from above computations if we replace $X_{ni}$ by $\eta_iX_{ni}$, while
	\begin{equation*}
		\|II_n\|_\infty \leq \frac{\|\hat T^*\|_{2,\infty}}{\alpha_n}\left\|\frac{u_n}{n}\sum_{i=1}^n\eta_i(\hat X_{ni} - X_{ni})\right\|.
	\end{equation*}
\end{proof}

\begin{lemma}\label{lemma:operator_to_kernel}
	Suppose that $T^*\in\Lc_{2,\infty}$ is an integral operator with continuous kernel function $k$, then
	\begin{equation*}
		\|T^*\|_{2,\infty} = \sup_{z\in[a,b]^p}\left(\int_{[a,b]^q}|k(z,w)|^2\dx w\right)^{1/2}\equiv \|k\|_{2,\infty},
	\end{equation*}
	where $\|k\|_{2,\infty}$ is a mixed norm on the iterated space $L_\infty[a,b]^p(L_2[a,b]^q)$.
\end{lemma}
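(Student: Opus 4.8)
The plan is to establish the two inequalities $\|T^*\|_{2,\infty}\leq\|k\|_{2,\infty}$ and $\|k\|_{2,\infty}\leq\|T^*\|_{2,\infty}$ separately; the first is Cauchy--Schwarz applied slicewise, and the second comes from plugging in a near-extremal test function.

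For the first inequality I would start from the representation $(T^*\psi)(z)=\int_{[a,b]^q}k(z,w)\psi(w)\,\dx w$. For each fixed $z\in[a,b]^p$ the slice $k(z,\cdot)$ is continuous on the compact set $[a,b]^q$, hence belongs to $L_2[a,b]^q$, and Cauchy--Schwarz gives $|(T^*\psi)(z)|\leq\|k(z,\cdot)\|\,\|\psi\|\leq\|k\|_{2,\infty}\|\psi\|$. A routine dominated-convergence argument using the uniform continuity of $k$ on the compact set $[a,b]^{p+q}$ shows that $T^*\psi\in C[a,b]^p$, so taking the supremum over $z$ gives $\|T^*\psi\|_\infty\leq\|k\|_{2,\infty}\|\psi\|$, and then the supremum over $\|\psi\|\leq1$ yields $\|T^*\|_{2,\infty}\leq\|k\|_{2,\infty}$.

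For the reverse inequality I would first observe that the function $z\mapsto\|k(z,\cdot)\|$ is continuous on $[a,b]^p$: uniform continuity of $k$ on $[a,b]^{p+q}$ bounds $\|k(z,\cdot)-k(z',\cdot)\|\leq\lambda([a,b]^q)^{1/2}\sup_w|k(z,w)-k(z',w)|$, and the reverse triangle inequality in $L_2$ closes the point. Hence the supremum defining $\|k\|_{2,\infty}$ is attained at some $z_0\in[a,b]^p$. If $\|k(z_0,\cdot)\|=0$ then $k\equiv0$ and the identity is trivial; otherwise set $\psi:=k(z_0,\cdot)/\|k(z_0,\cdot)\|$, which satisfies $\psi\in L_2[a,b]^q$ and $\|\psi\|=1$. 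Since $k$ is real-valued, $(T^*\psi)(z_0)=\|k(z_0,\cdot)\|^{-1}\int_{[a,b]^q}|k(z_0,w)|^2\,\dx w=\|k(z_0,\cdot)\|=\|k\|_{2,\infty}$, so $\|T^*\|_{2,\infty}\geq\|T^*\psi\|_\infty\geq|(T^*\psi)(z_0)|=\|k\|_{2,\infty}$. Combining the two inequalities proves the claim. If one prefers to avoid invoking attainment of the supremum, the same conclusion follows by choosing $z_n$ with $\|k(z_n,\cdot)\|\to\|k\|_{2,\infty}$ and using the corresponding normalized slices as test functions.

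The argument is essentially routine, so there is no serious obstacle; the only places that deserve a sentence of care are the claim that $T^*\psi$ genuinely lies in $C[a,b]^p$ (so that $\|T^*\psi\|_\infty$ is the honest supremum that the norm $\|\cdot\|_{2,\infty}$ refers to), the continuity of the slice-norm $z\mapsto\|k(z,\cdot)\|$ that makes its supremum attained, and the degenerate case in which the extremal slice vanishes.
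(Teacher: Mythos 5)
Your proposal is correct and follows essentially the same route as the paper: Cauchy--Schwarz slicewise for $\|T^*\|_{2,\infty}\leq\|k\|_{2,\infty}$, then attainment of the supremum at some $z_0$ by continuity and the normalized extremal slice $\psi=k(z_0,\cdot)/\|k(z_0,\cdot)\|$ as test function for the reverse inequality. Your extra remarks (continuity of $T^*\psi$, continuity of $z\mapsto\|k(z,\cdot)\|$, and the degenerate case $k\equiv 0$) only make explicit details the paper leaves implicit.
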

\begin{proof}
	By Cauchy-Schwartz inequality, we have $\|T^*\|_{2,\infty}\leq \|k\|_{2,\infty}$. On the other side, continuity of $k$ implies that $\exists z_0\in[a,b]^p$ such that $\|k\|_{2,\infty} = \left(\int_{[a,b]^q}|k(z_0,w)|^2\dx w\right)^{1/2}$. Take $\psi(w) = \frac{k(z_0,w)}{\|k(z_0,.)\|}$. Then $\|\psi\|=1$, and
	\begin{equation*}
		\|T^*\|_{2,\infty} \geq \|T^*\psi\|_{\infty} \geq |(T^*\psi)(z_0)| = \|k(z_0,.)\| = \|k\|_{2,\infty}.
	\end{equation*}
\end{proof}

\end{document}